\providecommand{\e}{\epsilon}
\providecommand{\La}{\Lambda}
\providecommand{\ut}{\tilde{u}}
\providecommand{\Vt}{\tilde{V}}
\providecommand{\Ut}{\tilde{U}}
\providecommand{\off}{\operatorname{off}}
\newcommand{\M}{\mathcal M}
\newcommand{\sM}{\mathcal M}
\newcommand{\inv}{^{-1}}
\newcommand{\diag}{\textbf{diag}}
\providecommand{\Ub}{\bar{U}}
\providecommand{\Vb}{\bar{V}}
\begin{document}

\title{Simultaneous diagonalization: the asymmetric, low-rank, and noisy settings}
\author{Volodymyr Kuleshov \and Arun Chaganty \and Percy Liang}

\institute{Stanford University, Stanford CA 94305, USA,\\
\email{kuleshov@stanford.edu}, \email{chaganty@cs.stanford.edu}, \email{pliang@cs.stanford.edu}}

\maketitle              

\begin{abstract}
Simultaneous matrix diagonalization is used as a subroutine in many machine learning problems, including blind source separation and paramater estimation in latent variable models. 
Here, we extend algorithms for performing joint diagonalization to low-rank and asymmetric matrices, 
and we also provide extensions to the perturbation analysis of these methods. Our results allow joint diagonalization to be applied in several new settings. 
\end{abstract}

\section{Introduction}
\label{sec:intro}

Consider a set of $L \geq 2$ matrices $\M = \{M_l\}_{l=1}^L$ of the form
\begin{align}
	M_l & = U \La_l U^T,
\end{align}
where $U \in \mathbb R^{d \times k}$ are factors common to all the matrices, and the diagonal $\La_l \in \mathbb R^{k \times k}$ contain weights that are specific to each matrix $M_l$.

Simultaneous matrix diagonalization consists in determining the unknown factors and weights from the matrices $M_l$. Unlike in traditional single-matrix diagonalization, the $U$ may be non-orthogonal (such factors are identifiable when $L \geq 2$; see Afsari \cite{afsari2008sensitivity}), and when the $U$ are orthogonal, simultaneously diagonalizing the entire set $\M$ if often more robust to noise that diagonalizing a single $M_l$.

Joint diagonalization arises in several machine learning settings, including blind-source separation \cite{ziehe2004fast} and latent variable estimation via tensor factorization \cite{kuleshov2015tensor}. 
However, our understanding of algorithms for jointly diagonalizing matrices is far from complete: even the low-rank ($k < d$) and the asymmetric settings have not been considered in the literature.

Here, we show how to extend existing algorithms --- notably the Jacobi \cite{cardoso1996joint} and QRJ1D \cite{afsari2006simple} methods --- to these two settings. Our extensions enable one to apply simultaneous diagonalization to several new problems. For example, it was recently shown that tensor factorization can be reduced joint matrix diagonalization; our algorithms make this reduction also applicable to low-rank and asymmetric tensors. This in turn leads to accuracy improvements on problems in community detection and for topic modeling.

Finally, we also extend existing perturbation analyses for noisy matrices of the form
\begin{align}
	M_l & = U \La_l U^T + \e R_l
\end{align}
for some $\e > 0$ and some matrix $R_l$ having unit norm. First, we give a simple generalization of existing bounds to the low-rank and asymmetric settings. These bounds can be used to derive formal guarantees for methods that use simultaneous diagonalization as a subroutine; we again provide tensor factorization as an example. However, current bounds only hold for the true joint diagonalizer of the set $\M$, and there are no guarantees on whether it is attained by joint diagonalization algorithms; this limits the usefulness of theoretical analyses based on these lemmas. In the last section of the paper, we address this shortcoming by showing that for sufficiently small noise, the global minimizer will be attained, as long as we initialize the diagonalization subroutine with factors obtained by diagonalizing a single matrix.
\section{Background}
\label{sec:background}

\subsection{Notation}
\label{sec:notation}

We establish notation for the classical symmetric simultaneous diagonalization case; we will extend it to the asymmetric case in a subsequent section.

We are given as input a set of matrices $M_1 ,\dots, M_L \in \mathbb R^{d \times d}$
where each $M_l$ can be expressed as
\begin{align}
M_l = U \Lambda_l U^T + \epsilon R_l.
\end{align}
The diagonal weight matrix $\Lambda_l \in \Re^{d \times d}$ and the noise $R_l$ (satisfying $||R_l|| \leq 1$) are individual
to each $M_l$, but the non-singular transform $U \in \mathbb{R}^{d \times d}$ is common to all the matrices.

Our goal is to find an invertible transform $V^{-1}$ such that each $V^{-1} M_l V^{-\top}$ is nearly diagonal.
This problem admits a unique solution when there are at least two matrices
  \cite{afsari2008sensitivity}.
There are a number of objective functions that are designed for determining joint diagonalizers \cite{cardoso1996joint,yeredor2002non,afsari2006simple},
but in this paper, we focus on a popular one that penalizes off-diagonal terms:
\begin{align}
\label{eqn:objective}
F(V)
= \sum_{l=1}^L \textrm{off}(V^{-1} M_l V^{-\top})
= \sum_{l=1}^L \sum_{i \neq j} (V^{-1} M_l V^{-\top})_{ij}^2.
\end{align}
An important setting of this problem, which we refer to as the \emph{orthogonal case},
is when $U$ is orthogonal, in which case we will also constrain the
optimization to be orthogonal $V^{-1} = V^\top$.

\subsection{Previous work}
\label{sec:previous}

There exist several algorithms for optimizing $F(V)$.
In this paper, we will use the Jacobi method
\cite{bunse1993numerical,cardoso1996joint} for the orthogonal case and the
QRJ1D algorithm \cite{afsari2006simple} for the non-orthogonal case.
Both techniques are based on same idea of iteratively constructing $V^{-1}$ via a product of simple matrices
$V^{-1} = B_T \cdots B_2 B_1$, where at each iteration $t = 1, \dots, T$, we choose $B_t$ to minimize $J(V)$.
Typically, this can be done in closed form.

The Jacobi algorithm for the orthogonal case is a simple adaptation of the
Jacobi method for diagonalizing a single matrix.
Each $B_t$ is chosen to be a {\em Givens} rotation \cite{bunse1993numerical} defined by two of the $d$ axes $i < j \in [d]$:
$B_t = \cos\theta (\Delta_{ii} + \Delta_{jj}) + \sin\theta (\Delta_{ij} - \Delta_{ji})$ 
for some angle $\theta$, with
$\Delta_{ij}$ being a matrix which is $1$ in the $(i,j)$-th entry and $0$
elsewhere.
We sweep over all $i<j$,
compute the best angle $\theta$ in closed form using the formula proposed by \cite{cardoso1996joint}
to obtain $B_t$, and then update each $M_l$ by $B_t^\top M_l B_t$.
The above can be done in 
$O(d^3 L)$ 
time per sweep.

\begin{algorithm}
 \KwData{symmetric matrices $(M_l)_{l=1}^L$}
 Let $U = I$\;
 \While{objective is decreasing}{
 \For{$i = 1,2,...,d$}{
     \For{$j = i+1,i+2,...,d$}{
       Let $\hat\theta$ be the minimizer of $F(G_{ij}(\theta))$\;
       $U \gets UG_{ij}(\hat\theta)$\;
       \For{$l = 1,2,...,L$}{
       		$M_l \gets G_{ij}(\hat\theta)^T M_l G_{ij}(\hat\theta)$\;
       }
     }
   }
 }
 \caption{The Jacobi algorithm for simultaneous diagonalization}\label{alg:jacobi}
\end{algorithm}

For the non-orthogonal case, the QRJ1D algorithm is similar,
  except that $B_t$ is chosen to be either a lower or upper unit triangular matrix
  ($B_t = I + a \Delta_{ij}$ for some $a$ and $i \neq j$).
The optimal value of $a$ that minimizes $J(V)$ can also be computed in closed form
(see \cite{afsari2006simple} for details).
The running time per iteration is the same as before.

\begin{algorithm}
 \KwData{symmetric matrices $(M_l)_{l=1}^L$}
 Let $U = I$\;
 \While{objective is decreasing}{
 \For{$i = 1,2,...,d$}{
     \For{$j = i+1,i+2,...,d$}{
       Let $\hat\theta$ be the minimizer of $F(G_{ij}(\theta))$\;
       $U \gets UG_{ij}(\hat\theta)$\;
       \For{$l = 1,2,...,L$}{
       		$M_l \gets G_{ij}(\hat\theta)^T M_l G_{ij}(\hat\theta)$\;
       }
     }
   }
 \For{$i = 1,2,...,d$}{
     \For{$j = i+1,i+2,...,d$}{
       Let $\hat a$ be the minimizer of $F(\Delta_{ij}(a))$\;
       $U \gets U\Delta_{ij}(\hat a)$\;
       \For{$l = 1,2,...,L$}{
       		$M_l \gets \Delta_{ij}(\hat a)^T M_l \Delta_{ij}(\hat a)$\;
       }
     }
   }
 }
 \caption{The QRJ1D algorithm for simultaneous non-orthogonal diagonalization}\label{alg:qrj1d}
\end{algorithm}

Most popular algorithms both in the orthogonal \cite{bunse1993numerical}
and the non-orthogonal setting
\cite{ziehe2004fast,yeredor2004approximate} are guaranteed to
converge locally. The formal question of global convergence is currently open even for methods that are widely used \cite{delathauwer2001independent,cardoso1996joint}.
In practice, however, the Jacobi-style methods we adopt are well-known to behave as if they global convergence \cite{bunse1993numerical,cardoso1996joint,delathauwer2001independent}.

In the asymmetric setting, variants of the Jacobi algorithm have been proposed for orthogonal factors \cite{congedo2011jointsvd}. These variants essentially diagonalize the set of matrices $M'_j = M_j^T M_j$. Notice that when $M_j = U \La V^T$, we have $M'_j = V \La^2 V^T$, and so the assymetric problem can be reduced to one with symmetric $M'_j$.
In the low-rank setting, an extension of the Jacobi algorithm has been proposed for a single matrix \cite{zha1998partialjacobi}. This extension implictely sorts the entries of the matrix and performs only $O(dk^2)$ Givens rotations per sweep. However, the $L \geq 2$ setting has not been considered in the literature.
Finally, for non-orthogonal matrices, neither the low-rank and asymmetric case has been worked out to the best of our knowledge.
\section{Low-rank matrices}
\label{sec:low-rank}

In this section, we provide extensions of simultaneous algorithms to low-rank matrices. 
We start with orthogonal factors; in this setting, we take inspiration from the sorted Jacobi method for a single low-rank matrix and propose a sorted Jacobi method for multiple matrices. Then, we show how to generalize the same sorting idea to the QRJ1D algorithm, which leads to a low-rank algorithm for the non-orthogonal setting as well.

\subsection{Orthogonal setting}
\label{sec:low-rank-orthogonal}

First, suppose that we applied the Jacobi algorithm to a single matrix $M$ whose eigenvalues $\lambda_1 \geq \lambda_2 \geq ... \geq \lambda_k$ (corresponding to the diagonal of the matrix $\La$) were sorted. 
In that case, we would only need Jacobi to zero out the entires of $M$ associated with the first $k$ rows or the first $k$ columns. In other words, we would have to transform the matrix into the following form:
\begin{align*}
\left(
\begin{array}{cccc}
\lambda_1 & 0 & 0 & 0 \\
0 & \lambda_2 & 0 & 0 \\
0 & 0 & \times & \times \\
0 & 0 & \times & \times
\end{array}
\right)
\end{align*}
The two left-most columns of the diagonalizing matrix will correspond to the eigenvectors associated with $\lambda_1$ and $\lambda_2$ while the other two columns will contain arbitrary numbers. Also, since Givens rotations are essentially independent of each other, it takes only $kd$ Givens rotations per sweep to turn the input matrix into the above form.

If the eigenvalues of the input matrix $M$ are not sorted, then we can sort the diagonal of $M$ after every sweep of Jacobi (while still performing only $kd$ Givens rotations per sweep). When the algorithm terminates, all the $k$ non-zero eigenvalues will find themselves in the top $k \times k$ corner; if that wasn't the case, then they would have been swapped out with another entry from outside that corner.

When there is more than one matrix, the components $j$ over which the rank is positive are ones for which $\sum_{l=1}^L | (\Lambda_l)_{jj}| > 0$. This suggests a natural extension of the above idea: choose Givens rotations in a way as to push mass on the sum of the absolute values of the matrix diagonals towards the upper left corner. This idea is implemented in Algorithm \ref{alg:low-rank-jacobi}

The sorting opereation corresponds to a permutation of the columns of the $M_j$, which does not change the value of the objective function. Thus the sorted Jacobi algorithm also decreases the objective function value at every iteration, which ensures its convergence to a low-rank diagonalizer. However, unlike in the single-matrix setting, we do not provide formal guarantees that the resulting diagonalizer is optimal; even in the full-rank setting convergence properties of the simultaneous Jacobi method remains an open problem. However, in the next section we demonstrate empiracally that just as in the full-rank setting, our low-rank extension appears to converge for any matrix.

\begin{algorithm}[h]
 \KwData{symmetric matrices $(M_l)_{l=1}^L$}
 Let $U = I$\;
 \While{objective is decreasing}{
 \For{$i = 1,2,...,k$}{
     \For{$j = i+1,i+2,...,d$}{
       Let $\hat\theta$ be the minimizer of $F(G_{ij}(\theta))$\;
       $U \gets UG_{ij}(\hat\theta)$\;
       \For{$l = 1,2,...,L$}{
       		$M_l \gets G_{ij}(\hat\theta)^T M_l G_{ij}(\hat\theta)$\;
       }
       \If{$\sum_{l=1}^L | (\Lambda_l)_{jj}| > \sum_{l=1}^L | (\Lambda_l)_{jj}|$}{
       		Flip columns $i$ and $j$ in $U$ and the $M_l$\;
       }
     }
   }
 }
 \caption{The low-rank Jacobi algorithm for simultaneous diagonalization}\label{alg:low-rank-jacobi}
\end{algorithm}

\subsection{Non-orthogonal setting}
\label{sec:low-rank-nonorthogonal}

The QRJ1D algorithm has a very similar structure to Jacobi: over the course of a sweep, the matrices $M_j$ are first multiplied by Givens rotation, and then by lower triangular matrices. In each case, a rotation only affects two columns and two rows of $M_j$.

We can similarly sort the diagonal entries of the matrices and zero-out only the top $k \times k$ square of the $M_j$. This leads to Algorithm \ref{alg:low-rank-qrj1d}.

\begin{algorithm}[h]
 \KwData{symmetric matrices $(M_l)_{l=1}^L$}
 Let $U = I$\;
 \While{objective is decreasing}{
 \For{$i = 1,2,...,k$}{
     \For{$j = i+1,i+2,...,d$}{
       Let $\hat\theta$ be the minimizer of $F(G_{ij}(\theta))$\;
       $U \gets UG_{ij}(\hat\theta)$\;
       \For{$l = 1,2,...,L$}{
       		$M_l \gets G_{ij}(\hat\theta)^T M_l G_{ij}(\hat\theta)$\;
       }
       \If{$\sum_{l=1}^L | (\Lambda_l)_{jj}| > \sum_{l=1}^L | (\Lambda_l)_{jj}|$}{
       		Flip columns $i$ and $j$ in $U$ and the $M_l$\;
       }
     }
   }
 \For{$i = 1,2,...,k$}{
     \For{$j = i+1,i+2,...,d$}{
       Let $\hat a$ be the minimizer of $F(\Delta_{ij}(a))$\;
       $U \gets U\Delta_{ij}(\hat a)$\;
       \For{$l = 1,2,...,L$}{
       		$M_l \gets \Delta_{ij}(\hat a)^T M_l \Delta_{ij}(\hat a)$\;
       }
       \If{$\sum_{l=1}^L | (\Lambda_l)_{jj}| > \sum_{l=1}^L | (\Lambda_l)_{jj}|$}{
       		Flip columns $i$ and $j$ in $U$ and the $M_l$\;
       }
     }
   }
 }
 \caption{The QRJ1D algorithm for simultaneous non-orthogonal diagonalization}\label{alg:low-rank-qrj1d}
\end{algorithm}
\section{Asymmetric matrices}
\label{sec:asymmetric}

Suppose now that we have a set of $L \geq 2$ matrices $\M = \{M_l\}_{l=1}^L$ of the form
\begin{align}
	M_l & = U \La_l V^T,
\end{align}
where $U \in \mathbb R^{d_1 \times k}$ and $V \in \mathbb R^{d_2 \times k}$ are sets of common factors, possibly non-orthogonal.

When the $U$ and $V$ are orthogonal, there is a well-known procedure that reduces this problem to the symmetric case by instead diagonalizing the matrices $M_l' = M_l^T M_L = V \La^2 V^T$. Unfortunately, this reduction does not work for non-orthogonal matrices; here, we propose another reduction that works in both orthogonal and non-orthogonal cases.

For each $M_l$, define another
matrix $ N_l = \left( \begin{smallmatrix}
    0 & M_l^\top \\
    M_l & 0
  \end{smallmatrix} \right)$ and observe that 
\begin{align*}
  \begin{bmatrix}
    0 & M_l^\top \\
    M_l & 0
  \end{bmatrix}
  &= 
  \begin{bmatrix}
    V & V \\
    U & -U
  \end{bmatrix}
  \begin{bmatrix}
    \Lambda_l & 0 \\
    0 & -\Lambda_l
  \end{bmatrix}
  \begin{bmatrix}
    V & V \\
    U & -U \\
  \end{bmatrix}^\top.
\end{align*}
The $(N_l)$ are symmetric matrices with common (in general,
non-orthogonal) factors. Therefore, they can be jointly diagonalized and from their components, we can recover the components of the $(M_l)$. 

Using the above low-rank algorithms, it is possible to determine the $U,V$ factors in $O(d_1 d_2^2)$ time (assuming $d_2 \geq d_1$). This is worse than the $O(d_1^2 d_2)$ time it takes to determine the SVD of a single matrix. It remains to be seen if non-orthogonal joint diagonalization admits algorithms are as fast as ones for the ordinary SVD.
\section{Experiments}
\label{sec:experiments}

\subsection{Full-rank matrices}

\begin{center}
\begin{figure}
\includegraphics[width=12cm]{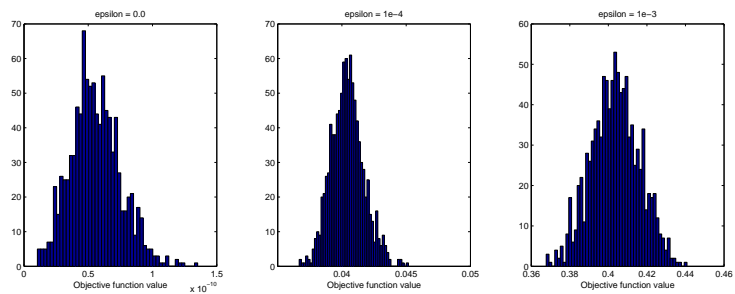}
\caption{Histogram of objective values attained by the Jacobi simultaneous diagonalization algorithm for 1000 random sets of jointly diagonalizable matrices initialized with various amounts of noise.} \label{fig:histogram}
\end{figure}
\end{center}

To assess the convergence properties of the Jacobi method, we ran the algorithm
1000 times on different sets of $L$ random matrices ($L=d=k=15$) corrupted with
varying amounts of noise ($\e = 0,1e-4, 1e-3$). At each run, we measured the
objective value function (the Frobenius norm of the off-diagonal elements), and
plotted the resulting histogram (Figure \ref{fig:histogram}).

Overall, we observe that the objective values for a Gaussian distribution for
the three settings $\e = 0, 1e-4, 1e-3$. All data points are concentrated in an
interval that depend either on the convergence threshold ($1e-12$) in the
$\e=0$ case or on the noise level (when $\e=1e-3,1e-4$). Most interestingly,
when $\e=0$, the mean of the observations is around $1e-10$ (which corresponds
to the precision of $1e-12$ multiplied by the $\sim$ 200 entries in the
matrices), and $\e>0$, the mean depends on the noise level. Quite strikingly,
multiplying the noise by ten produces essentially the same histogram within the
same bounds, except that it has been translated from between 0.035 and 0.045 to
between 0.35 and 0.45.

This observation strongly suggests that the Jacobi algorithm attains a globally
optimal solutions even at high noise levels.

\subsection{Low-rank matrices}

\subsection{Asymmetric matrices}
\section{Extension of perturbation analyses}
\label{sec:perturbation}

We conclude our discussion with some perturbation results for noisy matrices of the form
\begin{align}
	M_l & = U \La_l U^T + \e R_l
\end{align}
for some $\e > 0$ and some matrix $R_l$ having unit norm.

A perturbation analyses has been carried out for matrices of this form in both orthogonal and non-orthogonal settings. Our first two lemmas state that these analyses carry over to the low-rank case as well. The fact that they also carry over to the asymmetric case follows trivially from our reduction.

\begin{lemma}[\cite{cardoso1994perturbation}]\label{lem:cardoso-1}
  Let $M_l = U \La_l U^\top + \e R_l$, $l \in [L]$, be matrices with common factors $U \in \mathbb R^{d \times k}$ and diagonal $\Lambda_l \in \mathbb R^{k \times k}$.
  Let $\Ub \in \Re^{d \times d}$ be a full-rank extension of $U$ with columns
  $u_1, u_2, \dots, u_d$ and let $\Ut \in \Re^{d \times d}$ be
  the orthogonal minimizer
  of the joint diagonalization objective $F(\cdot)$. 
  Then, for all $u_j$, $j \in [k]$, there exists a column $\tilde u_j$ of $\Ut$ such that
  \begin{align}
    \|\ut_j - u_j\|_2 \le \e \sqrt{\sum_{i=1}^{d} E_{ij}^2} + o(\e),
  \end{align}
where $E \in \mathbb R^{d \times k}$ is
\begin{align}
  E_{ij} 
  &= \frac{\sum_{l=1}^L (\lambda_{il} - \lambda_{jl}) u_j^\top R_l u_i}
  {\sum_{l=1}^L (\lambda_{il} - \lambda_{jl})^2}
\end{align}
when $i\neq j$ and $i \leq k$ or $j \leq k$. We define $E_{ij} = 0$ when $i = j$ and $\lambda_{il} = 0$ when $i > k$.
\end{lemma}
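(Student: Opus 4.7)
My plan is a first-order perturbation expansion of the orthogonal minimizer $\Ut$ around $\Ub$. I parametrize $\Ut = \Ub(I + \e X) + o(\e)$; orthogonality of $\Ut$ forces $X + X^\top = 0$ at first order, so $X$ is skew-symmetric. Taking $\Ub$ to be an orthonormal extension of $U$ and adopting the convention $\lambda_{il} = 0$ for $i > k$, the last $d-k$ columns of $\Ub$ are orthogonal to those of $U$, so $\Ub^\top U \La_l U^\top \Ub = \diag(\lambda_{1l}, \dots, \lambda_{dl})$. Expanding and using $X^\top = -X$, the off-diagonal $(i,j)$ entry of $\Ut^\top M_l \Ut$ becomes
\begin{align*}
\e\bigl[(\lambda_{il} - \lambda_{jl})\,X_{ij} + u_i^\top R_l u_j\bigr] + O(\e^2).
\end{align*}

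Substituting into $F(\Ut) = \sum_l \off(\Ut^\top M_l \Ut)$, the leading $\e^2$ contribution is a quadratic form in $X$ that decouples across unordered pairs $\{i,j\}$ with $i<j$. Combining the contributions from positions $(i,j)$ and $(j,i)$ (noting $X_{ji} = -X_{ij}$), differentiating with respect to $X_{ij}$, and using symmetry of $R_l$ to identify $u_i^\top R_l u_j = u_j^\top R_l u_i$, I obtain the first-order optimality condition
\begin{align*}
\sum_{l=1}^L (\lambda_{il} - \lambda_{jl})^2 X_{ij} = -\sum_{l=1}^L (\lambda_{il} - \lambda_{jl})\, u_j^\top R_l u_i,
\end{align*}
i.e.\ $X_{ij} = -E_{ij}$. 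Since $\ut_j = u_j + \e \sum_i X_{ij}\,u_i + o(\e)$ and the $u_i$ are orthonormal, for any $j \le k$ this yields $\|\ut_j - u_j\|_2 = \e \sqrt{\sum_i X_{ij}^2} + o(\e) = \e\sqrt{\sum_i E_{ij}^2} + o(\e)$, the claimed bound.

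The main obstacle, and the only place where the low-rank case differs substantively from the classical full-rank proof of \cite{cardoso1994perturbation}, is the degeneracy when $i,j > k$: there $\lambda_{il} = \lambda_{jl} = 0$, so the leading quadratic has zero curvature in $X_{ij}$ and that coordinate is genuinely unidentifiable, reflecting the arbitrary choice of how to extend $U$ to $\Ub$ in its orthogonal complement. I handle this by noting that these degenerate coordinates affect only $\ut_j$ with $j > k$, about which the lemma makes no claim; every pair contributing to $\|\ut_j - u_j\|$ with $j \le k$ has at least one index in $[k]$, so $\sum_l (\lambda_{il} - \lambda_{jl})^2$ is strictly positive under the genericity already assumed on the $\La_l$, and $E_{ij}$ is well-defined. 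A final subtlety, which I would justify by an implicit function argument at $\e = 0$, is that the global orthogonal minimizer actually lies in an $O(\e)$ neighborhood of some such extension $\Ub$; this is standard given that $L \ge 2$ ensures identifiability in the noiseless case.
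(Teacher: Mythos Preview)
Your proposal is correct and follows essentially the same route as the paper. The paper's proof simply cites Cardoso's original first-order perturbation analysis (which is precisely the expansion $\Ut = \Ub(I+\e X)+o(\e)$ with skew-symmetric $X$ that you carry out) and then makes exactly your observation about the low-rank case: the entries $E_{ij}$ with $i,j>k$ are undefined because the leading quadratic is degenerate there, but these affect only the last $d-k$ columns of $\Ut$, about which the lemma says nothing; for $j\le k$ every relevant pair has an index in $[k]$ and the argument goes through unchanged.
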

\begin{proof}
  See Proposition 1 in \cite{cardoso1994perturbation}. Note that in the
  low rank setting, the entries of $E_{ij}$ (Equation 15 in \cite{cardoso1994perturbation}) where $i, j > k$ are not
  defined, however, these terms only effect the last $d-k$ columns of
  $\Ut$. The bounds for vectors $u_1,...,u_k$ only depend on $E_{ij}$ where $i \in [d]$ and $j \in [k]$, and these are derived in the low-rank setting in the same way as they are derived in the full-rank proof of 
  \cite{cardoso1994perturbation}.
\end{proof}

We now present the corresponding perturbation bounds in \cite{afsari2008sensitivity} to the low rank setting.
\begin{lemma}[\cite{afsari2008sensitivity}]\label{lem:afsari-1}
  Let $M_l = U \La_l U^\top + \e R_l$, $l \in [L]$, be matrices with common factors $U \in \mathbb R^{d \times k}$ and diagonal $\Lambda_l \in \mathbb R^{k \times k}$.
  Let $\Ub \in \Re^{d \times d}$ be a full-rank extension of $U$ with columns
  $u_1, u_2, \dots, u_d$ and let $\Vb = \Ub\inv$, with rows $v_1, v_2, \dots, v_d$.
  Let $\Vt \in \Re^{d \times d}$ be
  the minimizer
  of the joint diagonalization objective $F(\cdot)$ and let $\Ut = \Vt\inv$. 

  Then, for all $u_j$, $j \in [k]$, there exists a column $\tilde u_j$ of $\Ut$ such that
  \begin{align}
    \|\ut_j - u_j\|_2 \le \e \sqrt{\sum_{i=1}^d E_{ij}^2} + o(\e),
  \end{align}
where the entries of $E \in \mathbb R^{d \times k}$ satisfy the equation
  \begin{align*}
    \begin{bmatrix} E_{ij} \\ E_{ji} \end{bmatrix}
      &= 
\frac{-1}{\gamma_{ij}(1-\rho_{ij}^2)}
\begin{bmatrix} \eta_{ij} & -\rho_{ij} \\ -\rho_{ij} & \eta_{ij}^{-1} \end{bmatrix}
  \begin{bmatrix} T_{ij} \\ T_{ji} \end{bmatrix}.
  \end{align*}
when $i \neq j$ and either $i \leq k$ or $j \leq k$. When $i=j$, $E_{ij} = 0$.
The matrix $T$ has zero on-diagonal elements, and is
defined as
\begin{align*}
T_{ij} &= \sum_l v_i^\top R_l v_j \lambda_{jl}, & \textrm{for} 1 \le j \neq i \le d
\end{align*}
and the other parameters are
\begin{align*}
\gamma_{ij} & = \|\lambda_i\|_2 \|\lambda_j\|_2, &
\eta_{ij} & = \frac{\|\lambda_i\|_2}{\|\lambda_j\|_2}, &
\rho_{ij} & = \frac{\lambda_i^\top \lambda_j}{\|\lambda_j\|_2 \|\lambda_i\|_2}, &
(\lambda_i)_k & = \lambda_{ik}.
\end{align*}
We define $\lambda_{il} = 0$ when $i > k$.
\end{lemma}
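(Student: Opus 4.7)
The plan is to embed the low-rank problem into a full-rank one and then adapt the derivation of \cite{afsari2008sensitivity}, showing that the degeneracies introduced by setting part of the spectrum to zero only affect the last $d-k$ columns of $\Ut$. Concretely, I would view $U \in \mathbb R^{d \times k}$ as the first $k$ columns of its extension $\Ub$, and set $\bar{\Lambda}_l$ to have $\lambda_{il} = 0$ for $i > k$ as in the statement. Then $M_l = \Ub \bar{\Lambda}_l \Ub^\top + \e R_l$ holds in the ambient space, and the minimizer $\Vt$ admits a first-order expansion $\Vt = \Vb(I + \e X) + o(\e)$ that, after the standard reparametrization identifying $E$ with $-X^\top$, gives the bound $\|\ut_j - u_j\|_2 \le \e \sqrt{\sum_i E_{ij}^2} + o(\e)$ exactly as in Afsari's analysis.

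To recover the formula for $E$, I would set the first-order gradient of $F$ to zero at the perturbed minimizer and read off the decoupled $2 \times 2$ block system, one per unordered pair $\{i,j\}$ with $i \neq j$:
\begin{align*}
\begin{bmatrix} \|\lambda_j\|_2^2 & \lambda_i^\top \lambda_j \\ \lambda_i^\top \lambda_j & \|\lambda_i\|_2^2 \end{bmatrix}
\begin{bmatrix} E_{ij} \\ E_{ji} \end{bmatrix} = -\begin{bmatrix} T_{ij} \\ T_{ji} \end{bmatrix}.
\end{align*}
Whenever both $\lambda_i$ and $\lambda_j$ are nonzero, i.e.\ $i,j \le k$, this $2 \times 2$ matrix has determinant $\gamma_{ij}^2(1-\rho_{ij}^2)>0$ and its inversion yields exactly the closed form in the lemma statement (with $\gamma_{ij}, \eta_{ij}, \rho_{ij}$ as defined there). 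The blocks with $i,j > k$ are entirely trivial: the matrix on the left vanishes, and so does the right-hand side, since $T_{ij}$ carries a factor of $\lambda_{jl}=0$ and $T_{ji}$ carries a factor of $\lambda_{il}=0$. These blocks leave $E_{ij}, E_{ji}$ indeterminate, but as in the proof of Lemma~\ref{lem:cardoso-1} the indeterminacy only propagates into the last $d-k$ columns of $\Ut$, which the lemma does not bound.

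The main obstacle I anticipate is the mixed case $i > k$, $j \le k$ (and symmetrically), where exactly one of $\lambda_i,\lambda_j$ vanishes. Here the $2 \times 2$ block matrix is rank one rather than invertible, and the literal expression in the statement contains $0/0$ factors through $\gamma_{ij}$, $\eta_{ij}$, and $\rho_{ij}$, so these entries must be interpreted as a limit. I would verify by direct substitution that the system reduces to $\|\lambda_j\|_2^2 E_{ij} = -T_{ij}$ --- the other row being trivially $0 = 0$ because $T_{ji} = \sum_l v_j^\top R_l v_i \lambda_{il}$ vanishes for $i > k$ --- and then check that this agrees with the natural $\eta_{ij}, \rho_{ij} \to 0$ limit of the formula in the statement. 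Once the degenerate blocks are handled, only the column norm $\sum_i E_{ij}^2$ for $j \le k$ enters the final bound, and the remainder of the argument is an unchanged port of Afsari's full-rank derivation.
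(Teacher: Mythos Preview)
Your approach is essentially the same as the paper's: embed the low-rank problem in $\Re^{d\times d}$ via $\Ub$, invoke Afsari's first-order expansion $\Vt = (I+\e E)\Vb + o(\e)$, invert to $\Ut = \Ub(I-\e E)+o(\e)$, and observe that the entries of $E$ left undetermined by the degenerate $2\times2$ blocks sit only in columns $i>k$ of $\Ut$, which the lemma does not bound.

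Your treatment is in fact more careful than the paper's on one point. The paper's proof only flags the blocks with $i,j>k$ as ``not characterized,'' whereas you correctly notice that the mixed blocks ($i>k$, $j\le k$) are also singular. Your resolution---that the rank-one system still pins down $E_{ij}=-T_{ij}/\|\lambda_j\|_2^2$ while leaving $E_{ji}$ free, and that the free $E_{ji}$ lands in column $i>k$---is exactly what is needed and is implicitly relied upon (but not spelled out) in the paper. Your limiting interpretation $\eta_{ij},\rho_{ij}\to 0$ of the closed-form formula is consistent with this, though strictly speaking the limit of $\rho_{ij}$ as $\lambda_i\to 0$ is direction-dependent; it is cleaner to simply read $E_{ij}$ off the degenerate linear system as you do, rather than to push the closed-form formula through a limit.
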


\begin{proof}
 In Theorem
  3 in \cite{afsari2008sensitivity} it is shown that $\Vt = (I + \epsilon E)
  V + o(\epsilon)$, where $E_{ij}$ is defined for $i, j \in [d]$ (Equation 36 in \cite{afsari2008sensitivity}). Then,
  \begin{align*}
    \Ut &= \Ut (I + \epsilon E)\inv + o(\epsilon) \\
      &= \Ut (I - \epsilon E) + o(\epsilon).
  \end{align*}
  Note that, once again, in the low rank setting, the entries of
  $E_{ij}$ when $i, j > k$ are not characterized by Afsari's results; however, these terms only
  effect the last $d-k$ columns of $\Ut$.
\end{proof}

\begin{lemma}
\label{lem:no-eij-bound-1}
  Let $M_l = U \La_l U^\top + \e R_l$, $l \in [L]$, be matrices with common factors $U \in \mathbb R^{d \times k}$ and diagonal $\Lambda_l \in \mathbb R^{k \times k}$.
  Let $\Ub \in \Re^{d \times d}$ be a full-rank extension of $U$ with columns
  $u_1, u_2, \dots, u_d$ and let $\Vb = \Ub\inv$, with rows $v_1, v_2, \dots, v_d$.
  Let $\Vt \in \Re^{d \times d}$ be
  the minimizer
  of the joint diagonalization objective $F(\cdot)$ and let $\Ut = \Vt\inv$. 

  Then, for all $u_j$, $j \in [k]$, there exists a column $\tilde u_j$ of $\Ut$ such that
  \begin{align}
    \|\ut_j - u_j\|_2 \le \e \sqrt{\sum_{i=1}^d E_{ij}^2} + o(\e),
  \end{align}
where the entries of $E \in \mathbb R^{d \times k}$ are bounded by
\begin{align*}
  |E_{ij}|
  &\le \frac{1}{1 - \rho_{ij}^2} 
  \left(\frac{1}{\|\lambda_i\|^2_2} + \frac{1}{\|\lambda_j\|^2_2}\right)
  \left( \left|\sum_{l=1}^L v_i^\top R_l v_j \lambda_{jl} \right| + \left|\sum_{l=1}^L v_i^\top R_l v_j \lambda_{il} \right| \right),
\end{align*}
when $i \neq j$ and $E_{ij} = 0$ when $i = j$  and $\lambda_{il} = 0$ when $i > k$.
Here $\lambda_i = (\lambda_{i1}, \lambda_{i2}, ..., \lambda_{iL}) \in \Re^L$ and
$\rho_{ij} = \frac{\lambda_{i}^\top \lambda_{j}}{\|\lambda_{i}\|_2
\|\lambda_{j}\|_2}$ is the modulus of uniqueness, a measure of
how ill-conditioned the problem is.
\end{lemma}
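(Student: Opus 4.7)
The plan is to derive the bound directly from the closed-form expression for $E_{ij}$ given in Lemma~\ref{lem:afsari-1} by expanding the $2\times 2$ linear system, applying the triangle inequality, and then cleaning up the coefficients using Cauchy--Schwarz and AM--GM.

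First I would unpack the matrix equation in Lemma~\ref{lem:afsari-1}. Reading off the first row gives
\begin{align*}
E_{ij} \;=\; \frac{-1}{\gamma_{ij}(1-\rho_{ij}^2)}\bigl(\eta_{ij} T_{ij} - \rho_{ij} T_{ji}\bigr).
\end{align*}
Plugging in $\gamma_{ij}=\|\lambda_i\|_2\|\lambda_j\|_2$ and $\eta_{ij}=\|\lambda_i\|_2/\|\lambda_j\|_2$, the first coefficient simplifies to $\eta_{ij}/\gamma_{ij}=1/\|\lambda_j\|_2^2$, and the second becomes $\rho_{ij}/(\|\lambda_i\|_2\|\lambda_j\|_2)$. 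Applying the triangle inequality then yields
\begin{align*}
|E_{ij}|
\;\le\; \frac{1}{1-\rho_{ij}^2}\left(\frac{|T_{ij}|}{\|\lambda_j\|_2^2} + \frac{|\rho_{ij}|\,|T_{ji}|}{\|\lambda_i\|_2\|\lambda_j\|_2}\right).
\end{align*}

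Next I would eliminate the $\rho_{ij}$ in the numerator and the mixed denominator. Cauchy--Schwarz gives $|\rho_{ij}|\le 1$, and the AM--GM-type bound $\tfrac{1}{\|\lambda_i\|_2\|\lambda_j\|_2}\le \tfrac{1}{\|\lambda_i\|_2^2}+\tfrac{1}{\|\lambda_j\|_2^2}$ (since $\|\lambda_i\|_2\|\lambda_j\|_2\ge \min(\|\lambda_i\|_2^2,\|\lambda_j\|_2^2)$) bounds the second coefficient. The first coefficient $1/\|\lambda_j\|_2^2$ is trivially dominated by $1/\|\lambda_i\|_2^2+1/\|\lambda_j\|_2^2$, so both $|T_{ij}|$ and $|T_{ji}|$ can be factored against the same quantity, giving
\begin{align*}
|E_{ij}|
\;\le\; \frac{1}{1-\rho_{ij}^2}\left(\frac{1}{\|\lambda_i\|_2^2}+\frac{1}{\|\lambda_j\|_2^2}\right)(|T_{ij}|+|T_{ji}|).
\end{align*}

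Finally I would rewrite $T_{ij}$ and $T_{ji}$ in the form claimed in the statement. By definition, $T_{ij}=\sum_l v_i^\top R_l v_j\,\lambda_{jl}$ and $T_{ji}=\sum_l v_j^\top R_l v_i\,\lambda_{il}$; since we are in the symmetric setting where each $R_l$ is symmetric (otherwise $M_l=U\Lambda_l U^\top+\epsilon R_l$ would not be symmetric), we have $v_j^\top R_l v_i=v_i^\top R_l v_j$, so $T_{ji}=\sum_l v_i^\top R_l v_j\,\lambda_{il}$. Substituting these two expressions produces exactly the bound stated. The convention $\lambda_{il}=0$ for $i>k$ is inherited from Lemma~\ref{lem:afsari-1}, and the $E_{ij}=0$ case for $i=j$ is likewise inherited.

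I do not expect a significant obstacle: the argument is a short chain of elementary inequalities applied to the explicit formula of Lemma~\ref{lem:afsari-1}. The only mild subtlety is justifying the looseness we accept when we symmetrize the coefficient $1/\|\lambda_j\|_2^2$ into $1/\|\lambda_i\|_2^2+1/\|\lambda_j\|_2^2$; this is done purely to obtain a symmetric-looking bound and loses at most a factor of two, which is absorbed into the stated form.
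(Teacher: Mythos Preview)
Your proposal is correct and follows essentially the same route as the paper: both start from the explicit $2\times 2$ system of Lemma~\ref{lem:afsari-1} and pass to the stated bound via elementary inequalities together with the identity $(\eta_{ij}+\eta_{ji})/\gamma_{ij}=1/\|\lambda_i\|_2^2+1/\|\lambda_j\|_2^2$. The only cosmetic difference is that the paper bounds $|E_{ij}|+|E_{ji}|$ in one stroke via an $\ell_1$ estimate on the pair and then drops $|E_{ji}|$, whereas you read off the first row and bound each coefficient separately; your version also makes explicit the symmetry of $R_l$ needed to rewrite $T_{ji}$ in the stated form, which the paper leaves implicit.
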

\begin{proof}

From Lemma \ref{lem:afsari-1}, we have that
\begin{align*}
\left\| \begin{bmatrix} E_{ij} \\ E_{ji} \end{bmatrix} \right\|
&\leq \frac{\eta_{ij} + \eta_{ji}}{\gamma_{ji}(1-\rho^2_{ij})} 
    \left \| \begin{bmatrix} T_{ij} \\  T_{ji} \end{bmatrix} \right \|,
\end{align*}
where
\begin{align*}
\gamma_{ij} & = \|\lambda_i\|_2 \|\lambda_j\|_2, &
\eta_{ij} & = \frac{\|\lambda_i\|_2}{\|\lambda_j\|_2}, &
\rho_{ij} & = \frac{\lambda_i^\top \lambda_j}{\|\lambda_j\|_2 \|\lambda_i\|_2},
\end{align*}
and the matrix $T$ is defined to be zero on the diagonal and for $i \neq j$
defined as
\begin{align*}
T_{ij} &= \sum_{l=1}^L v_i^\top R_l v_j \lambda_{jl}, & \textrm{for} 1 \le j \neq i \le d
\end{align*}

Taking $|| \cdot ||$ to be the $l_1$-norm in the above expression, we have that
$$ |E_{ij}| \leq |E_{ij}| + |E_{ji}| \leq \frac{\eta_{ij} + \eta_{ji}}{\gamma_{ji}(1-\rho^2_{ij})} \left( |T_{ij}| + |T_{ji}| \right). $$
Since 
$$ \frac{\eta_{ij} + \eta_{ji}}{\gamma_{ji}} = \frac{\|\lambda_i\|_2^2 + \|\lambda_j\|_2^2} {\|\lambda_i\|_2^2 \|\lambda_j\|_2^2} = \frac{1}{\|\lambda_i\|^2_2} + \frac{1}{\|\lambda_j\|^2_2}$$
and
$$ T_{ij} = \sum_{l=1}^L v_i^\top R_l v_j \lambda_{jl}, $$
the claim follows.
\end{proof}

These results hold for the joint matrix diagonalizer, i.e. the global optimum of the objective $F$. It is not clear whether this optimum can be attained by existing algorithms such as Jacobi, which limits the theoretical applicability of the above lemmas. Although empirical results strongly indicate that the global minimizer is indeed always found in the orthogonal case, we also complement these results with the lemma below.

\begin{lemma}
Suppose the Jacobi simultaneous diagonalization algorthim is initialized with $\hat U$, the set eigenvectors obtained from diagonalizing a single matrix. Then for $\e > 0$ small enough, Jacobi will converge to a point at which Lemmas \ref{lem:cardoso-1} and \ref{lem:afsari-1} hold.
\end{lemma}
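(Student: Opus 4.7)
The plan is to combine two standard facts: for small $\e$, the eigendecomposition of a single perturbed matrix already yields an $O(\e)$-accurate estimate of the true joint diagonalizer, and the Jacobi sweep is locally convergent, so an $O(\e)$-initialization lies in the basin of attraction of the minimizer characterized by Lemmas~\ref{lem:cardoso-1} and~\ref{lem:afsari-1}.

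First, I would set up the noiseless baseline. When $\e = 0$, $M_1 = U \La_1 U^\top$, and since we are in the orthogonal Jacobi setting we may assume (possibly after replacing $M_1$ by a generic linear combination $\sum_l c_l M_l$ to break eigenvalue ties) that the diagonal entries of $\La_1$ are distinct. Then the eigendecomposition of $M_1$ returns the columns of $U$ up to the usual sign and permutation ambiguity, and so $\hat U = U$ in the noiseless limit. Next, I would invoke classical symmetric eigenvector perturbation theory (Davis--Kahan, or the $\sin\Theta$ theorem applied to each simple eigenvalue): for $\e$ smaller than a fixed fraction of the spectral gap of $\La_1$, the eigenvectors $\hat u_j$ of the noisy $M_1 = U \La_1 U^\top + \e R_1$ satisfy $\|\hat u_j - u_j\| = O(\e)$, and hence $\|\hat U - U\|_F = O(\e)$.

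The third step is to invoke the local convergence of the Jacobi sweep established in \cite{bunse1993numerical,cardoso1996joint}. These works show that there is a neighborhood $\mathcal N$ of the true joint diagonalizer at which the Jacobi objective $F$ is locally convex along the Givens directions, so that every sweep strictly decreases $F$ and the iterates converge to that stationary point. In the noiseless problem this neighborhood contains $U$ itself and has radius bounded below by a constant depending only on $\{\La_l\}$. By continuity of $F$ and of its Hessian in $\e$, for sufficiently small $\e$ the perturbed minimizer $\Ut$ has its own such neighborhood $\mathcal N_\e$, and both $\Ut$ and $\mathcal N_\e$ differ from their $\e=0$ counterparts by $O(\e)$; here the perturbation bound $\|\Ut - U\|_F = O(\e)$ already guaranteed by Lemmas~\ref{lem:cardoso-1} and~\ref{lem:afsari-1} is precisely what is needed. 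Combining with $\|\hat U - U\|_F = O(\e)$ via the triangle inequality, the initialization $\hat U$ lies inside $\mathcal N_\e$ once $\e$ is small enough, and Jacobi then converges to $\Ut$.

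The main obstacle is the third step: proving that the local basin of attraction of the true diagonalizer does not shrink faster than $O(\e)$ as $\e$ varies. This reduces to showing that the second-order variation of $F$ along each Givens direction is strictly positive at $\Ut$ uniformly for small $\e$, which follows from the continuity of the Hessian of $F$ in both $V$ and the noise, together with the known positive definiteness at $\e = 0$ (the same computation used to derive the closed-form Jacobi angle in \cite{cardoso1996joint}). With this in hand, the stated conclusion -- convergence to the point at which Lemmas~\ref{lem:cardoso-1} and~\ref{lem:afsari-1} apply -- is immediate.
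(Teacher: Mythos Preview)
Your high-level plan matches the paper's: show that the single-matrix eigendecomposition lands $O(\e)$-close to $U$, then argue that Jacobi, started there, stays in a neighborhood of the perturbed minimizer. The differences are in how the second step is executed, and that is where your sketch has a genuine gap.

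You invoke \cite{bunse1993numerical,cardoso1996joint} for a ``basin of attraction'' result and then argue by continuity of the Hessian of $F$. But Jacobi is not a gradient or Newton method on $F$; it is a cyclic coordinate scheme over Givens rotations, and local convexity of $F$ along each Givens direction does \emph{not} by itself imply that the full sweep is a contraction toward the minimizer, nor that the iterates converge to a single point rather than merely having decreasing objective values. The references you cite establish local convergence for single-matrix Jacobi or under additional structure; the paper explicitly flags that local/global convergence for simultaneous Jacobi is delicate and therefore does not rely on those citations. Instead, the paper builds the argument from scratch: it first shows that Jacobi step sizes $\|d(V)\|$ are small whenever $F(V,\e)$ is near optimal (a direct algebraic fact about the Givens update), then constructs explicit ``capture'' sets $D(V(\e),\e,\delta)=\{\|V-V(\e)\|<\delta,\ F(V,\e)<F(V(\e),\e)+s\}$ that are invariant under one Jacobi step, and uses uniform continuity of $F$ on the compact set $O(n)\times[0,\e_0]$ together with a compactness argument to place both $U$ and the initializer $W(\e)$ inside such a set for small $\e$. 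This replaces your Hessian-continuity heuristic with something that actually controls the discrete Jacobi dynamics.

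There is a second, subtler difference. You aim to show convergence to \emph{the} minimizer $\Ut$ and then simply cite Lemmas~\ref{lem:cardoso-1}--\ref{lem:afsari-1}. The paper does not attempt to identify the limit with $\Ut$; it only shows the iterates accumulate at some critical point $V$ with $\|V-U\|<2\delta$, and then separately proves that \emph{any} critical point of $F(\cdot,\sM_\e)$ near $U$ satisfies $\|E\|=O(\e)$, by expanding Cardoso's first-order condition $S(V,\sM_\e)=S(V,\sM_\e)^\top$ and reading off $E_{ij}=O(\e)$ from the resulting linear relation. Your route would need uniqueness of the critical point in the basin, which you have not established; the paper's route sidesteps this by working at the level of first-order conditions. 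If you want to repair your argument, either supply a genuine local-convergence theorem for the simultaneous Jacobi sweep (not just positive second variation along Givens directions), or adopt the paper's two-stage strategy: trap the iterates near $U$ by a capture-set/compactness argument, then show the perturbation bound holds at every nearby critical point.
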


\begin{proof}
See appendix.
\end{proof}

The above lemmas can be used to derive theoretical guarantees for algorithms that use them as a subroutine.

\bibliographystyle{splncs03}
\bibliography{ref}

 \newpage
 \appendix

 \section{Extension of perturbation analyses}
\label{app:perturbation}

In this section, we analyze the convergence of the Jacobi method. We argue that for small enough $\e > 0$, Jacobi will converge to a point at which our perturbation lemma holds. The results in this section complement our empirical assessment of the convergence of the method.

\subsection{Notation}
Let $F(V, \sM) : O(n) \to \mathbb R^+$ be the orthogonal joint diagonalization criterion for a set of matrices $\sM = \{M_1, \cdots, M_L\}$,
\begin{align*}
  F(V, \sM) 
    &= \sum_{i=1}^L \| \off(V^\top M_l V) \|_F^2 \\
    &= \sum_{i=1}^L \| V^\top M_l V \|_F^2 - \| \diag(V^\top M_l V) \|_F^2.
\end{align*}

Let $\sM_\epsilon = \{ U^\top D_l U + \epsilon R_l \}_{l=1}^{L}$ be the
set of matrices given as input. Note that when $\epsilon = 0$, the set
of matrices commute and are jointly diagonalizable, with diagonalizer
$U$. For simplicity, we will let the set of matrices $U^\top D_l U$ be fixed and use the shorthand $F(V,\e)$ to denote $F(V, \sM_\e)$.

Several observations should be made about $F(V, \e)$:
\begin{itemize}
\item $F(V, \e)$ is continuous in $V, \e$. Since $O(n)$ is compact, for any fixed $\e$, $F(V, \e)$ is uniformly continuous.
\item If we restrict $\e$ to lie on a closed interval $I$, then $F(V, \e) : O(n) \times I \to \mathbb R^+$ is uniformly continuous with both variables and with the metric $|| \cdot ||'$ on $(V,\e)$ defined as $ || (V,\e) ||' = || V || + |\e| $.
\item Each global minimizer $U$ of $F(V, 0)$ is isolated. We let $\gamma > 0$ denote the radius of the ball
$$ \Gamma := \{V : || V - U || < \gamma \} $$
around $U$ on which $F(V,0) > F(U,0)$ for all $V \in \Gamma$.
\end{itemize}

Finally we introduce a few additional pieces of notation. We let $V(\e)$  minimizers of $F(V,\e)$ on the set $\Gamma$; such minimizers always exist by compactness of $F(V,\e)$. We also define $U(\e)$ to be a global minimizer of $F(V,\e)$; note that $U(0) = U$. Let $d(V)$ denote the Jacobi step taken from $V$: if $V^+ = \textrm{Jacobi}(V)$, then $d(V) = (V^+ - V)$. Finally, let $W(\e)$ to be the joint diagonalizer obtained from diagonalizing a single matrix in $\sM_\e$.

\subsection{Outline}

Our goal is to show that if Jacobi is initialized with $W(\e)$, then all sufficiently small $\e$, it will will converge to a point at which Cardoso's lemma holds. This involves several steps:
\begin{enumerate}
\item Showing that Jacobi converges to a point close to an unperturbed local minimum $U$.
\item For all such points that are close to $U$, Cardoso's lemma holds.
\end{enumerate}

\subsection{Convergence of Jacobi to a neighborhood of $U$}

Our first lemma states that the steps taken by Jacobi get arbitrarily small when the objective value is close to optimal.

\begin{lemma}[Vanishing steps]
\label{lem:d_k-bound}
Let $U(\e)$ be a global minimizer of $F(V,\e)$. For all $\delta > 0$, there is a $t > 0$ such for all $\e > 0$, whenever $| F(U(\e), \e) - F(V,\e) | < t$, $|| d(V) || < \delta$.
\end{lemma}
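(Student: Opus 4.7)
The strategy is to argue by contradiction, exploiting compactness of $O(n)$. Restrict $\e$ to the compact interval $I = [0,\e_{\max}]$ as in the notation section, and suppose the claim fails for some $\delta > 0$: there exist sequences $\e_n \in I$ and $V_n \in O(n)$ with $|F(U(\e_n),\e_n) - F(V_n,\e_n)| \to 0$ while $\|d(V_n)\| \geq \delta$. By compactness of $O(n)\times I$, pass to a convergent subsequence with $V_n \to V^*$ and $\e_n \to \e^*$.

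The next step is to identify $V^*$ as a global minimizer of $F(\cdot,\e^*)$. The value function $\e \mapsto F(U(\e),\e)$ is continuous by Berge's maximum theorem, since $F$ is jointly continuous and the feasible set $O(n)$ is compact. Combined with the uniform continuity of $F$ on $O(n)\times I$ noted in the setup, taking limits in $|F(U(\e_n),\e_n) - F(V_n,\e_n)| \to 0$ gives $F(V^*,\e^*) = F(U(\e^*),\e^*)$. At this global minimizer the first-order optimality condition along each Givens direction forces the optimal angle $\hat\theta_{ij}$ to be zero for every pair $(i,j)$, so the Jacobi sweep applies no rotation and $d(V^*) = 0$.

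The contradiction will follow from $d(V_n) \to d(V^*) = 0$, for which the step map $(V,\e) \mapsto d(V,\e)$ must be continuous near $(V^*,\e^*)$. Each Givens angle in the sweep is given by Cardoso's closed-form $\hat\theta_{ij} = \tfrac{1}{2}\arctan(N_{ij}/D_{ij})$, where $N_{ij}$ and $D_{ij}$ are specific analytic functions of the current matrix entries. At $V^*$ one has $N_{ij} = 0$ from first-order optimality, and the second-order condition gives $D_{ij} \geq 0$; when $D_{ij}(V^*,\e^*) > 0$ the arctangent is continuous in a neighborhood, giving $d(V_n) \to 0$ and the desired contradiction.

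The main obstacle is the degenerate case $D_{ij}(V^*,\e^*) = 0$, in which the inner minimization has a higher-order critical point and the closed-form selection of $\hat\theta_{ij}$ can jump discontinuously. A cleaner alternative that bypasses continuity of $d$ altogether is to establish a quadratic descent inequality $F(V,\e) - F(V^+,\e) \geq c\|d(V)\|^2$ directly from Cardoso's formulas: each Givens substep decreases $F$ by an amount proportional to $\hat\theta_{ij}^2$ while contributing $\Theta(|\hat\theta_{ij}|)$ to $\|d(V)\|$, and summing across the sweep yields such a bound with a constant $c$ depending only on $\|\La\|$ and $\e_{\max}$. Under this bound, the lemma follows immediately by taking $t = c\delta^2$, avoiding both the compactness argument and any non-degeneracy assumption on $V^*$.
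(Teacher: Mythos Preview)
The paper's own proof of this lemma is a one-line deferral: ``Follows from some algebra (see Maleko, 2003).'' In other words, the paper asserts that the bound is obtained by a direct computation relating the size of a Jacobi update to the gap in objective value, and does not use any compactness or continuity argument at all. Your second approach --- the quadratic descent inequality $F(V,\e)-F(V^+,\e)\ge c\|d(V)\|^2$ extracted from Cardoso's closed-form angles --- is therefore the route the paper has in mind, while your primary compactness argument is genuinely different.

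On the compactness argument: you correctly identify its weak point, namely that the Jacobi update map $(V,\e)\mapsto d(V,\e)$ need not be continuous when the closed-form denominator $D_{ij}$ vanishes. This is a real obstruction, not a technicality: at such points the optimal Givens angle can jump, so $d(V_n)\to d(V^*)$ cannot be concluded from $V_n\to V^*$ alone. The argument as written is therefore incomplete, and you rightly abandon it.

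On the algebraic alternative: the direction is correct and matches the paper, but your specific claim that each Givens substep decreases $F$ by an amount proportional to $\hat\theta_{ij}^2$ with a constant depending only on $\|\Lambda\|$ and $\e_{\max}$ is not justified and is in fact delicate. For a single $2\times 2$ block the decrease has the form $B(1-\cos 4\hat\theta)$ with $B$ the amplitude of the trigonometric objective; when the diagonal entries in that block nearly coincide, $B$ can be arbitrarily small while $\hat\theta$ remains of order one, so the ratio (decrease)$/\hat\theta^2$ is not uniformly bounded below without some eigenvalue-gap control. The algebra in the cited reference presumably handles this more carefully (e.g.\ by bounding $|\hat\theta_{ij}|$ directly in terms of the relevant off-diagonal entries under a gap condition, rather than by a bare quadratic-descent constant). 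As written, your sketch asserts the key inequality rather than proving it, and the uniform constant you claim would need an explicit argument that is missing here.
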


\begin{proof}
Follows from some algebra (see Maleko, 2003).
%
%
\end{proof}

Next, we want to establish local convergence results for Jacobi: when it is started close to a local minimizer, it converges to that minimizer. 

First, we will need to show that the step sizes Jacobi takes are small around certain local minimizers of $F(V,\e)$. In particular, when a local minimizer $V(\e)$ is close to $U$, we can use the fact that the steps around $U$ are small to show that steps around $V$ are small as well. This is what the following lemma establishes.

\begin{lemma}[Vanishing steps around local minima]
\label{lem:convergence-threshold}
For all $\gamma > \delta > 0$, there exists an $\e_0 > 0$ and an $r > 0$ such that for all $0 \leq \e \leq \e_0$, if $ V(\e) $ is a minimizer of $F(V,\e)$ on the set $\Gamma = \{V : | V - U | \leq \gamma\}$ satisfying  $ || U - V(\e) || < r $, then we have
$$ || V - V(\e) || + || d(V) || < \delta $$
whenever $|| V - V(\e) || < r$.
\end{lemma}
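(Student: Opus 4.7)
The plan is to chain uniform continuity of $F$ on the compact domain $O(n) \times [0,1]$ with the Vanishing Steps lemma (Lemma \ref{lem:d_k-bound}) via an $\e/3$-style argument, exploiting the crucial fact that at $\e=0$ the minimum value is $F(U,0)=0$. In particular I will avoid any attempt to locate $V(\e)$ near the global minimizer $U(\e)$ in operator norm, and instead argue only at the level of objective values.

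First I apply Lemma \ref{lem:d_k-bound} with target accuracy $\delta/2$ to obtain a threshold $t>0$ such that, for every $\e\ge 0$, whenever $|F(U(\e),\e)-F(V,\e)|<t$, one has $\|d(V)\|<\delta/2$. Next, using uniform continuity of $F$ on the compact set $O(n)\times[0,1]$ with product metric $\|V-V'\|+|\e-\e'|$, I choose a single radius $r>0$ satisfying (i) $r\le\delta/2$, and (ii) $\|V-U\|+\e<3r$ forces $|F(V,\e)-F(U,0)|<t/3$. Since $F(U,0)=0$ and $F\ge 0$, clause (ii) is simply the estimate $F(V,\e)<t/3$. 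I then set $\e_0:=r$.

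To verify the conclusion, fix $\e\le\e_0$ and a minimizer $V(\e)$ of $F(\cdot,\e)$ on $\Gamma$ with $\|V(\e)-U\|<r$. For any $V$ with $\|V-V(\e)\|<r$, the triangle inequality gives $\|V-U\|<2r$ and $\e<r$, so (ii) yields $F(V,\e)<t/3$. Applying (ii) directly to $V=U$ gives $F(U,\e)<t/3$, and since $U(\e)$ is the global minimizer, $0\le F(U(\e),\e)\le F(U,\e)<t/3$. Combining these, $|F(V,\e)-F(U(\e),\e)|\le F(V,\e)+F(U(\e),\e)<2t/3<t$, so by Lemma \ref{lem:d_k-bound}, $\|d(V)\|<\delta/2$. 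Adding $\|V-V(\e)\|<r\le\delta/2$ yields the required bound $\|V-V(\e)\|+\|d(V)\|<\delta$.

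The main subtlety I anticipate is conceptual rather than computational: the hypothesis only says that $V(\e)$ is a minimizer on $\Gamma$, not a global one, so we cannot directly compare $F(V(\e),\e)$ to $F(U(\e),\e)$ via optimality, nor can we assume $V(\e)\to U(\e)$. The escape is that we never need $V(\e)$ and $U(\e)$ to be close as matrices; because $F(U,0)=0$, norm-closeness of $V$ to $U$ alone forces $F(V,\e)$ to be small, and then non-negativity of $F$ closes the gap to the global minimum value $F(U(\e),\e)$. In effect, the minimizer property of $V(\e)$ plays no role in the core estimate; it is only its hypothesized proximity to $U$ that matters.
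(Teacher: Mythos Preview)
Your proof is correct and follows essentially the same strategy as the paper: invoke Lemma~\ref{lem:d_k-bound} to reduce the task to showing $|F(V,\e)-F(U(\e),\e)|<t$, then use uniform continuity of $F$ together with $F(U,0)=0$ to force $F(V,\e)$ small whenever $V$ is near $U$ and $\e$ is small. The paper organizes the continuity in two stages (first choose $r$ for $V$-continuity at $\e=0$, then choose $\e_0$ for $\e$-continuity uniformly in $V$) and closes with a four-term telescope through $F(V,0)$, $F(U(0),0)$, and $F(U(\e),0)$; you instead use joint uniform continuity on $O(n)\times[0,1]$, set $\e_0=r$, and bound $|F(V,\e)-F(U(\e),\e)|$ directly via non-negativity of $F$. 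Your packaging is arguably cleaner and sidesteps the need to control $F(U(\e),0)$ explicitly; your closing remark that the minimizer property of $V(\e)$ on $\Gamma$ is never actually used is accurate and applies equally to the paper's argument. One cosmetic slip: you write ``$\e<r$'' where the hypothesis only gives $\e\le\e_0=r$, but since $\|V-U\|<2r$ is strict the sum is still strictly below $3r$, so nothing breaks.
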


\begin{proof}
By Lemma \ref{lem:d_k-bound}, there exists a $t > 0$ such that for any $\e \geq 0$, $| F(U(\e), \e) - F(V,\e) | < t$ implies that $|| d(V) || < \delta / 2$.

Choose $r > 0$ such that $r < \delta /2 $ and such that for all $|| V - U || < 2r$, we have $| F(U(0), 0) - F(V,0) | < t/4$.
Choose $\e_0 > 0$  such that for all $V \in O(n)$ and all $0 \leq \e \leq \e_0$, we have $ || F(V,0) - F(V,\e) || < t/4. $

Then, when $|| V - V(\e) || < r$, we have $|| V - U || \leq || V - V(\e) || + || V(\e) - U || \leq 2r$, and thus $| F(U(0), 0) - F(V,0) | < t/4$.

Next, for $0 \leq \e \leq \e_0$, and for the same $V$ as above we have
\begin{align*}
F(V,\e) - F(U(\e), \e)
& \leq | F(V, \e) - F(V,0) | + | F(U(\e), \e) - F(U(\e), 0) | \\
& \;\;\;\;\;\; + | F(U(0), 0) - F(U(\e), 0) | + | F(V,0) - F(U(0), 0) | \\
& < t / 4 + t / 4 + t / 4 + t / 4
= t.
\end{align*}
By definition of $t$, $|| d(V) || < \delta / 2 $ and
$$ || V - V(\e) || + || d(V) || < \delta /2 + \delta /2 \leq \delta. $$
\end{proof}

Next, we want to show for minima $V(\e)$ close to $U$, there is an open set around each $V(\e)$ that ``captures" the Jacobi iterates such that they never leave that set.

Here is the intuition behind the proof. Suppose $V(\e)$ is a local minimum. Consider a connected level set $L$ around $V(\e)$. Because our algorithm always decreases the objective function, if we start in $L$ then we shouldn't leave it. The only way we can leave $L$ is if we take a step size large enough to move us far from the local minimum $V(\e)$ to an region where the objective function value is smaller than in $L$. However, if we define a set $S$ that is contained in $L$ and in which the step sizes are always sufficiently small, then it is possible to show that the iterates will never leave $S$.


\begin{lemma}[Convergence ball]
\label{lem:capture}
For all $\gamma > \delta > 0$, there exist $\e_0, r, s > 0$ such that for all $0 \leq \e \leq \e_0$, and for all local minima $V(\e)$ of $F(V,\e)$ on the set $\Gamma = \{V : | V - U | < \gamma\}$ satisfying  $ || U - V(\e) || <  r $, if we start the algorithm at a point $V_k$ in the set
$$ D(V(\e), \e, \delta) = \{ || V - V(\e) || < \delta ;\; F(V,\e) < F(V(\e), \e) + s \}. $$
then we will also have $V_{k+1} \in D(V(\e), \e, \delta)$.
\end{lemma}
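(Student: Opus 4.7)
The plan is to combine two ingredients: Jacobi monotonically decreases $F(\cdot,\e)$, and Lemma~\ref{lem:convergence-threshold} forces the Jacobi step size to be small whenever we are already close to $V(\e)$. The set $D(V(\e),\e,\delta)$ is carved out by both a ball and a sub-level-set; the sub-level-set part will be preserved for free by monotonicity, while the ball part requires the step to be small, which in turn forces me to use $s$ to push points of $D(V(\e),\e,\delta)$ into a smaller ball on which Lemma~\ref{lem:convergence-threshold} applies.

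First I would apply Lemma~\ref{lem:convergence-threshold} with the target step bound $\delta/2$ in place of $\delta$, obtaining $\e_1>0$ and $r_1\in(0,\delta/2)$ such that whenever $\e\leq\e_1$, $V(\e)$ is a local minimum in $\Gamma$ with $||V(\e)-U||<r_1$, and $||V-V(\e)||<r_1$, we have $||V-V(\e)||+||d(V)||<\delta/2$. Next I would choose $s>0$ and then shrink $\e_0\leq\e_1$, $r\leq r_1$ so that every point of $D(V(\e),\e,\delta)$ actually lies in the sub-ball $\{V:||V-V(\e)||<r_1\}$. Because $U$ is the unique minimum of $F(\cdot,0)$ on the compact shell $\{V:r_1/2\leq||V-U||\leq\delta+r_1\}$, continuity yields $s_1>0$ with $F(V,0)\geq F(U,0)+4s_1$ on that shell. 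Uniform continuity of $F$ in $(V,\e)$ on compact sets then lets me transfer the estimate: taking $s=s_1$ and $\e_0,r$ small enough, any $V$ with $r_1\leq||V-V(\e)||<\delta$ satisfies $F(V,\e)\geq F(V(\e),\e)+s$ and is therefore excluded from $D(V(\e),\e,\delta)$.

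With these choices fixed the invariance argument is short. Let $V_k\in D(V(\e),\e,\delta)$. The moat argument gives $||V_k-V(\e)||<r_1$, so Lemma~\ref{lem:convergence-threshold} yields $||V_{k+1}-V(\e)|| \leq ||V_k-V(\e)||+||d(V_k)||<\delta/2<\delta$, while monotonicity of Jacobi gives $F(V_{k+1},\e)\leq F(V_k,\e)<F(V(\e),\e)+s$. Both conditions defining $D(V(\e),\e,\delta)$ are preserved.

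The main obstacle is the moat step: ensuring that, uniformly over small $\e$ and over admissible local minima $V(\e)$ close to $U$, no spurious secondary low point of $F(\cdot,\e)$ creeps into the $\delta$-ball and erodes the gap. The restriction $||V(\e)-U||<r$ together with the isolation of $U$ as the unique minimizer of $F(\cdot,0)$ on $\Gamma$, pushed through uniform continuity of $F$ in $(V,\e)$ on compact sets, is what rules this out for $\e$ small enough.
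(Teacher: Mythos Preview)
Your proof is correct and follows the same overall architecture as the paper: use the sub-level constraint in $D$ to force $V_k$ into a smaller ball around $V(\e)$ on which Lemma~\ref{lem:convergence-threshold} controls the step, and then invoke monotonicity of Jacobi to preserve the sub-level constraint.

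The one substantive difference is how the threshold $s$ is produced. The paper defines a gap function $\phi(t,\e)=\inf\{F(V,\e)-F(V(\e),\e):\|V-V(\e)\|>t,\ V\in\Gamma\}$ and sets $s=\inf_{0\le\e\le\e_0}\phi(r,\e)$, then uses monotonicity of $\phi(\cdot,\e)$ to conclude $\|V_k-V(\e)\|<r$ from $F(V_k,\e)-F(V(\e),\e)<s$. It does not, however, verify that this infimum is strictly positive. Your ``moat'' argument instead builds the gap at $\e=0$ from the isolation of $U$ on the compact shell and then transfers it to small $\e$ via uniform continuity of $F$ together with $F(U,\e)\ge F(V(\e),\e)$; this explicitly delivers $s>0$ uniformly in $\e$, which is exactly what the statement requires. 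So your route is slightly more careful on the point the paper leaves implicit, at the cost of a few more moving parts (the shell radii and the $4s_1$ budget). One small thing to make explicit when you write it out: ensure the outer shell radius $\delta+r_1$ stays inside $\Gamma$ by also imposing $r\le r_1\le \gamma-\delta$, so that the isolation hypothesis on $U$ actually applies on the whole shell.
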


\begin{proof}
By Lemma \ref{lem:convergence-threshold}, there exists an $\e_0 > 0$ and an $r > 0$ such that for all $0 \leq \e \leq \e_0$ and for $|| V - V(\e) || < r$, where $V(\e)$ is a local minimizer of $F(V,\e)$ satisfying $ || U - V(\e) || <  r $, we have
$$ || V - V(\e) || + || d(V) || < \delta. $$

Let $\phi(t,\e) = \inf_{t < || V - V(\e) || ;\; V \in \Gamma} F(V,\e) - F(V(\e), \e) \geq 0$ and observe that $\phi(t,\e)$ is monotonically increasing in $t$.
Let $s = \inf_{0 \leq \e \leq \e_0} \phi(r, \e)$ and consider the set
$$ D(V(\e), \e, \delta) = \{ || V - V(\e) || < \delta ;\; F(V,\e) < F(V(\e), \e) + s \}. $$
Suppose that $V_k \in D(V(\e), \e, \delta)$. Since
$$ \phi( || V_{k} - V(\e) || ) \leq F(V_k, \e) - F(V(\e), \e) < s \leq \phi(r, \e) $$
and $\phi$ is monotonically increasing, we have $ || V_{k} - V(\e) || < r$, and thus
$ || V_{k+1} - V(\e) || < \delta $. Also, $F(V_{k+1}, \e) \leq F(V_k, \e)$, so
$$ F(V_{k+1}, \e) - F(V(\e), \e) < s. $$
Thus $V_{k+1} \in D(V(\e), \e, \delta)$.
\end{proof}

The next lemma says that for small enough $\e$, most of these ``capture" sets also contain the unperturbed minimizer $U$.

\begin{lemma}[Convergence balls in the vicinity of global optima]
\label{lem:interior}
For all $\gamma \geq \delta \geq 0$, 
there exists an open set $C$ around $U$, 
as well as $\e_0, t > 0$ such that for all $0 \leq \e \leq \e_0$, for all $0 \leq \delta \leq \delta_0$, and for all local minimizers $V(\e)$ such that $|| U - V(\e) || < t$, $C \subseteq D(V(\e), \e, \delta)$.
\end{lemma}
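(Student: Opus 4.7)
The plan is to take $C$ to be an open Euclidean ball of radius $\rho$ centered at $U$ and to fix $\rho$, $t$, and $\e_0$ small enough so that every $V \in C$ belongs to the capture set $D(V(\e), \e, \delta)$ of Lemma~\ref{lem:capture}. Applying Lemma~\ref{lem:capture} to the given $\gamma \geq \delta > 0$ furnishes constants $\e_0, r, s > 0$; these are imported from that lemma, and the only freedom I would use is to shrink $\rho$, $t$, and possibly $\e_0$ further.

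The distance clause $\|V - V(\e)\| < \delta$ in the definition of $D(V(\e), \e, \delta)$ is immediate from the triangle inequality: if $V \in C$ and $\|U - V(\e)\| < t$, then $\|V - V(\e)\| \leq \|V - U\| + \|U - V(\e)\| < \rho + t$, so it suffices to require $\rho + t < \delta$.

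The objective clause $F(V,\e) < F(V(\e),\e) + s$ will be extracted from the uniform continuity of $F$ on the compact product $O(n) \times [0, \e_0]$ under the metric $\|(V, \e)\|' = \|V\| + |\e|$ noted in the Notation subsection. Given the constant $s > 0$ from Lemma~\ref{lem:capture}, choose $\mu > 0$ so that $\|V_1 - V_2\| + |\e_1 - \e_2| < \mu$ implies $|F(V_1, \e_1) - F(V_2, \e_2)| < s$. Now shrink $\rho$ and $t$ so that $\rho + t < \min(\delta, \mu)$; for any $V \in C$ and any admissible local minimizer $V(\e)$, the pairs $(V, \e)$ and $(V(\e), \e)$ lie within $\mu$ of each other in the $\|\cdot\|'$-metric, hence $F(V, \e) - F(V(\e), \e) < s$, which is what we need.

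The main obstacle is ensuring that the constant $s > 0$ inherited from Lemma~\ref{lem:capture} is genuinely positive, i.e.\ that the infimum $s = \inf_{0 \leq \e \leq \e_0} \phi(r, \e)$ is strictly positive on the chosen interval. At $\e = 0$, $\phi(r, 0) > 0$ because $U$ is an isolated minimizer of $F(\cdot, 0)$ on the compact set $\Gamma$, so the objective is strictly larger than $F(U,0)$ on the (compact) complement of the $r$-ball around $U$ inside $\Gamma$. By uniform continuity of $F$ in $(V, \e)$ together with the monotonicity of $\phi$ in its first argument, $\phi(r, \e)$ is bounded below by a positive quantity provided $\e$ is small enough; shrinking $\e_0$ one final time therefore guarantees $s > 0$. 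Everything else is careful bookkeeping of how the four interdependent constants $\rho$, $t$, $\mu$, and $\e_0$ chain together so that the two defining inequalities of $D(V(\e), \e, \delta)$ both hold simultaneously on $C$.
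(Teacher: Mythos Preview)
Your proposal is correct and follows essentially the same route as the paper: invoke Lemma~\ref{lem:capture} to obtain $\e_0, r, s$, handle the distance clause of $D(V(\e),\e,\delta)$ by the triangle inequality through $U$, and handle the objective clause by uniform continuity of $F$. The only cosmetic difference is that the paper defines $C$ as the intersection of a $\delta/2$-ball with a sublevel set $\{F(\cdot,0) < F(U,0)+s/4\}$ and then telescopes $|F(V,\e)-F(V(\e),\e)|$ into four pieces each bounded by $s/4$, whereas you take $C$ to be a pure ball of radius $\rho$ and apply joint uniform continuity once to the pair $(V,\e),(V(\e),\e)$; your version is slightly more direct, and your remark that $s>0$ must be secured (by shrinking $\e_0$ if necessary) fills in a detail the paper leaves implicit in Lemma~\ref{lem:capture}. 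One small bookkeeping point worth making explicit: you should also require $t \le r$ so that the hypothesis $\|U-V(\e)\|<t$ guarantees $V(\e)$ lies in the regime where Lemma~\ref{lem:capture} defines $D(V(\e),\e,\delta)$.
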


\begin{proof}
Let $\delta > 0$ be fixed. Recall from Lemma \ref{lem:capture}, that the sets $D(V(\e), \e, \delta)$ have the form
$$ D(V(\e), \e, \delta) = \{ || V - V(\e) || < \delta ;\; F(V,\e) < F(V(\e), \e) + s \}, $$
where $s$ is a constant independent of $\e$. These sets are defined for all local minima $V(\e)$ such that $|| V(\e) - U || < r$, for some $r > 0$. Let $\e_1$ be the upper bound on $\e$ that is given by the lemma.

Let $t'$ be such that by the uniform continuity of $f$, we have $ | F(U,\e) - F(V(\e),\e) | < s / 4 $ for $|| U - V(\e) || < t'$ and for all $0 \leq \e \leq \e_1$.
Let $\e_2$ be such that for $0 \leq \e, \e' \leq \e_2$, $ || F(V,\e) -  F(V,\e') || < s / 4 $ for all $V$ such that $|| V - U || < \delta$.
Finally let $\e_0 = \min(\e_1, \e_2)$ and let $t = \min(t', r, \delta/2)$.

Define the set $C$ as
$$ C = \{ || V - U || < \delta/2 ;\; F(V,0) < F(U, 0) + s / 4 \}. $$
For $V \in C$ and for $0 \leq \e \leq e_0$, we have
\begin{align*}
| F(V,\e) - F(V(\e),\e) | 
& \leq | F(V,0) - F(V,\e) | + | F(U,\e) - F(V(\e),\e) | \\
& \;\;\;\;\;\;\;\; + | F(U,0) - F(U,\e) |  + | F(V,0) - F(U,0) | \\
& <  s/4 + s/4 + s/4 + s/4 = s.
\end{align*}
Moreover, for $V \in C$,
$$ || V - V(\e) || \leq || V - U || + || U - V(\e) || < \delta / 2 + \delta / 2 = \delta. $$
This establishes that $C \subseteq D(V(\e), \e, \delta)$.
\end{proof}

Thus the sets $D(V(\e), \e, \delta)$ have non-empty interior and the size of that interior is bounded from below. Next, we would like to show that for small enough perturbations, there exist local minima $V(\e)$ of $F(V,\e)$ that are arbitrarily close to $U$.

\begin{lemma}[Existence of close local minima]
\label{lem:perturbed-minimum-is-close}
For all $\gamma > \delta > 0$, there exists an $\e_0 > 0$ such that for $0 \leq \e \leq \e_0$, there is a $V(\e)$ such that $||V(\e) - U|| < \delta$ and $V(\e)$ is a local minimizer of $F(V, \e)$
in the sense that $F(V(\e), \e) \geq F(V, \e)$ on the set $\{ V : || V - V(\e) || < \gamma \}$.
\end{lemma}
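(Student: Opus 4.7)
The plan is to combine the isolation of $U$ as an unperturbed minimizer of $F(\cdot,0)$ with the uniform continuity of $F$ in $(V,\e)$, to show that any minimizer of $F(\cdot,\e)$ on a sufficiently small closed ball around $U$ cannot escape to the boundary of that ball when $\e$ is small. The candidate $V(\e)$ will then simply be this interior minimizer.

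Concretely, I would first fix an intermediate radius $0 < \delta' < \delta$ and, using that $F(\cdot,0)$ is continuous on the compact sphere $S_{\delta'} = \{V : ||V-U|| = \delta'\} \subset \Gamma$ and attains its strict minimum on $\Gamma$ only at $U$, extract a quantitative gap
\begin{align*}
c \;:=\; \min_{V \in S_{\delta'}} F(V,0) - F(U,0) \;>\; 0.
\end{align*}
Next, by the uniform continuity of $F$ on $O(n) \times [0,\e_1]$ for any fixed $\e_1 > 0$ (which is one of the properties of $F$ noted at the start of the appendix), I would choose $\e_0 \in (0,\e_1]$ small enough that $|F(V,\e) - F(V,0)| < c/4$ for every $V \in \bar{B}(U,\delta')$ and every $\e \in [0,\e_0]$. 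For such $\e$ this forces $F(V,\e) > F(U,\e) + c/2$ on the sphere $S_{\delta'}$, while by compactness and continuity $F(\cdot,\e)$ still attains its minimum over the closed ball $\bar{B}(U,\delta')$ at some point $V(\e)$. The boundary gap pushes this minimizer into the open ball $B(U,\delta')$, so in particular $||V(\e)-U|| < \delta' < \delta$, and being an interior minimum $V(\e)$ is a local minimum of $F(\cdot,\e)$.

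The main subtlety I anticipate is reconciling the \emph{local} minimum produced here (a minimum on $\bar{B}(U,\delta')$ for a small $\delta'$) with the statement's requirement that $V(\e)$ minimize $F(\cdot,\e)$ on the potentially larger ball $\{V : ||V - V(\e)|| < \gamma\}$. If the $\gamma$ in the statement refers back to the radius of $\Gamma$, then applying the same gap-plus-uniform-continuity argument to the enlarged set $\bar{B}(U,\gamma)$, together with the fact that $||V(\e)-U|| < \delta$ makes $B(V(\e),\gamma)$ approximately contained in $\Gamma$, upgrades the local minimum to the desired radius; otherwise, if only an unspecified neighborhood is required, the construction above suffices without modification.
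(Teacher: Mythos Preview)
Your approach is correct and close in spirit to the paper's, but the paper argues more tersely by contradiction rather than via a quantitative gap. It defines $V(\e)$ directly as a minimizer of $F(\cdot,\e)$ on the closed ball $\{V : \|V-U\| \le \gamma\}$ (which exists by compactness), then supposes the conclusion fails: along some sequence $\e_n \to 0$ one has $\|V(\e_n)-U\| \ge \delta'$. Passing to a convergent subsequence $V(\e_n) \to Y$ and using $F(V(\e_n),\e_n) \le F(U,\e_n)$ together with continuity in both arguments yields $F(Y,0) \le F(U,0)$ with $\delta' \le \|Y-U\| \le \gamma$, contradicting the isolation of $U$ on $\Gamma$. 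Your direct gap argument and this sequential-compactness argument are two standard ways to run the same idea; yours gives an explicit $\e_0$ in terms of the gap $c$, while the paper's is shorter.

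The subtlety you flag about the radius of the local-minimum neighborhood is real, and the paper's choice to minimize over $\bar B(U,\gamma)$ from the outset is exactly what addresses it: since $V(\e)$ is a minimizer on that entire ball and lies within $\delta$ of $U$, it automatically minimizes on $B(V(\e),\gamma-\delta)$, which is the sense in which the lemma is used downstream. Your plan to ``upgrade'' by applying the gap argument on the larger set $\bar B(U,\gamma)$ amounts to the same thing; you would just replace the sphere $S_{\delta'}$ by the full annulus $\{\delta' \le \|V-U\| \le \gamma\}$ when defining $c$, and then take the minimizer on $\bar B(U,\gamma)$ rather than on $\bar B(U,\delta')$.
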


\begin{proof}
By compactness, for every $\e$, there exists a minimum $V(\e)$ of $F(V,\e)$ on the set $\{ V : || U - V || \leq \gamma \}$. 
Suppose that the claim of the lemma does not hold; then there is a $\gamma > \delta' > 0$ and a sequence of $V(\e_n), \e_n$ with $\e_n \to 0$ and $\delta' \leq ||V(\e_n) - U|| \leq \gamma$ such that
$$ F(V(\e_n), \e_n) - F(U, \e_n) \leq 0 $$
for all $n$. Taking limits, we find that $F(Y, 0) \leq F(U, 0)$, where $Y = \lim_{n \to \infty} V(\e_n)$ and $\delta' \leq ||Y - U|| \leq \gamma$, contradicting the fact that $U$ was the only minimum on $\Gamma$.
\end{proof}

The next lemma deals with the initializer $W(\e)$.

\begin{lemma}[Diagonalization of a single matrix]
\label{lem:diagonalize-one-matrix}
Suppose the eigenvalues of the $\sM_0$ are distinct. Let $\e > 0$, and let $W(\e)$ be the estimate of the eigenvalues obtained by diagonalizing one of the $\sM_\e$. As $\e \to 0$, $W(\e) \to U$.
\end{lemma}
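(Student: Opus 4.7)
The plan is to reduce the statement to standard matrix perturbation theory for a single symmetric matrix with simple spectrum. Fix the particular matrix $M \in \sM_\e$ that is diagonalized to produce $W(\e)$, and write $M(\e) = M_0 + \e R$ with $M_0 = U^\top D U$ for the corresponding $D \in \{D_l\}$. By hypothesis the diagonal entries of $D$ are distinct, so the eigenvalues of $M_0$ are all simple and the eigenvectors of $M_0$ are, up to sign and ordering, precisely the columns of $U$.

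First I would control the eigenvalues. Weyl's inequality gives $|\lambda_i(M(\e)) - \lambda_i(M_0)| \le \e \|R\| \le \e$, so if $g > 0$ denotes the minimum gap between the diagonal entries of $D$, then for $\e < g/3$ the eigenvalues of $M(\e)$ stay distinct and are paired one-to-one with those of $M_0$ by closeness. Second, with this pairing in hand, I would invoke a classical eigenvector-perturbation bound (Davis--Kahan $\sin\Theta$, or equivalently the first-order Kato expansion for simple eigenvalues) to conclude that each corresponding eigenvector $w_i(\e)$ of $M(\e)$, once its sign is fixed, satisfies $\|w_i(\e) - u_i\|_2 = O(\e/g)$. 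Assembling the $w_i(\e)$ into $W(\e)$ in the same column order as $U$ yields $\|W(\e) - U\| \to 0$ as $\e \to 0$.

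The only real subtlety is the usual labeling/sign ambiguity inherent to a black-box diagonalization routine: its output is an unordered set of unit eigenvectors, each determined only up to a sign. However, once $\e$ is small enough that the perturbed eigenvalues are well separated, proximity gives a unique matching $w_i(\e) \leftrightarrow u_i$, and the sign can be chosen so that $\langle w_i(\e), u_i\rangle \ge 0$. Under these canonical choices the conclusion $W(\e) \to U$ is immediate from the continuity of simple eigenpairs. This is the main (and essentially only) obstacle, and it is handled purely by the distinct-eigenvalue hypothesis.
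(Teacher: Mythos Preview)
Your proposal is correct and follows essentially the same route as the paper: the paper's proof is the one-line observation that eigenvectors and eigenvalues of $M + \e R$ are continuous in $\e$ for small $\e$ when the eigenvalues are simple, and you have simply spelled this out via Weyl's inequality and Davis--Kahan, together with the standard sign/ordering convention. There is no substantive difference in approach.
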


\begin{proof}
Follows from the fact that eigenvectors and eigenvalues of $M + \e D$ are continuous in $\e$ for small enough $\e$.
\end{proof}

We use all of the above results to establish the following important lemma says that we can converge arbitrarily close to a global optimum for small amounts of noise.

\begin{lemma}[Convergence to vicinity of global minimum]
There exist $\delta_0 > 0$ and $\e_0 > 0$ such that for all $0 \leq \delta < \delta_0$, there exists an $0 \leq \e < \e_0$ such that when the Jacobi algorithm is initialized at a point $W(\e)$ obtained from diagonalizing one matrix will almost surely converge to a local minimum $V$ of $F(\cdot, \sM_\e)$ with $|| V - U || < 2 \delta$.
\end{lemma}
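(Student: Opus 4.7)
The plan is to chain together the five preceding lemmas to trap the Jacobi trajectory near $U$, and then upgrade trajectory-trapping to convergence by exploiting that the iterate step sizes must vanish. First, I would choose $\delta_0$ small enough that $\{V : \|V - U\| < 2\delta_0\} \subseteq \Gamma$, so that $U$ remains the unique unperturbed minimizer throughout the region of interest. For any $\delta < \delta_0$, Lemma \ref{lem:interior} then supplies an open neighborhood $C$ of $U$ and thresholds $\e_0^{(1)}, t > 0$ such that $C \subseteq D(V(\e), \e, \delta)$ for every $\e \le \e_0^{(1)}$ and every local minimizer $V(\e)$ of $F(\cdot, \e)$ lying within $t$ of $U$.

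Next, I would use Lemma \ref{lem:perturbed-minimum-is-close} to pick $\e_0^{(2)}$ so that a perturbed local minimizer $V(\e)$ with $\|V(\e) - U\| < \min(\delta, t)$ exists for every $\e \le \e_0^{(2)}$, and Lemma \ref{lem:diagonalize-one-matrix} to pick $\e_0^{(3)}$ so that the single-matrix initializer satisfies $W(\e) \in C$ for every $\e \le \e_0^{(3)}$. Setting $\e_0 = \min(\e_0^{(1)}, \e_0^{(2)}, \e_0^{(3)})$ aligns all three conditions: for any $\e \le \e_0$, the iteration starts at $W(\e) \in C \subseteq D(V(\e), \e, \delta)$, and Lemma \ref{lem:capture} forces every subsequent iterate $V_k$ to remain in $D(V(\e), \e, \delta)$. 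In particular $\|V_k - V(\e)\| < \delta$, so the triangle inequality yields $\|V_k - U\| < 2\delta$ for all $k$.

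To upgrade trajectory-containment to actual convergence, I would combine the monotone decrease of $F(V_k, \e)$ (bounded below, hence convergent) with Lemma \ref{lem:d_k-bound} to conclude that the Jacobi step sizes $\|d(V_k)\|$ tend to zero. Every subsequential limit $V^\star$ inside the compact capture set is then a fixed point of a full Jacobi sweep, and hence a critical point of $F(\cdot, \e)$. Generically in $\sM_\e$ --- which is what the ``almost surely'' qualifier is meant to absorb --- the critical points of $F(\cdot, \e)$ inside $D(V(\e), \e, \delta)$ are isolated, so the entire trajectory must converge to one of them, and that limit inherits the bound $\|V^\star - U\| < 2\delta$.

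The main obstacle is this last step. Monotone decrease plus vanishing step-size do not by themselves rule out a Jacobi sequence that oscillates among distinct subsequential limits on a plateau of $F(\cdot, \e)$. Pathological configurations --- for instance, diagonals $\La_l$ whose entries coincide in the same coordinates across all $l$ --- would create a continuum of critical points, which is why the proof must invoke a genericity assumption on the spectra of $\sM_\e$ to ensure isolation. Making this clause precise, and confirming that the limiting critical point is a local minimum rather than a saddle, is the most delicate part of the argument; the preceding capture-ball machinery, by contrast, is essentially a compactness-plus-continuity bookkeeping exercise given the lemmas already in hand.
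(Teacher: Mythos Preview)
Your capture-ball argument is essentially identical to the paper's: invoke Lemma~\ref{lem:interior} to get $C$ and thresholds, Lemma~\ref{lem:perturbed-minimum-is-close} to place a perturbed minimizer $V(\e)$ within $\min(t,\delta)$ of $U$, Lemma~\ref{lem:diagonalize-one-matrix} to land $W(\e)$ in $C$, then Lemma~\ref{lem:capture} to trap the trajectory in $D(V(\e),\e,\delta)$ and finish with the triangle inequality. The paper sets $\delta_0=\gamma$ and $\e_0=\min(\e_1,\e_2,\e_3)$ exactly as you do.

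The two differences are worth noting. First, the paper attributes the ``almost surely'' qualifier to Lemma~\ref{lem:diagonalize-one-matrix}: the single matrix being diagonalized is obtained by random projection, so its eigenvalues are almost surely distinct, which is the hypothesis that lemma needs. You instead use ``almost surely'' to justify isolation of critical points inside the capture set. The paper's reading is the intended one. Second, where you carefully flag the passage from trajectory-containment to actual convergence as the delicate step---vanishing step sizes, subsequential limits, genericity to rule out plateaus---the paper simply writes ``By compactness, the iterates will converge to a point $Y$'' and moves on. So you are not taking a different route; you are being more scrupulous about a step the paper glosses over. Your concern that monotone decrease plus vanishing steps do not by themselves force convergence is legitimate, and the paper does not address it.
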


\begin{proof}

By Lemma \ref{lem:interior}, 
for all $\gamma \geq \delta \geq 0$, 
there exists an open set $C$ around $U$, 
as well as $\e_1, t > 0$ such that for all $0 \leq \e \leq \e_1$, for all $0 \leq \delta \leq \delta_0$, and for all local minimizers $V(\e)$ such that $|| U - V(\e) || < t$, $C \subseteq D(V(\e), \e, \delta)$, where $D(V(\e), \e, \delta)$ is a set defined in Lemma \ref{lem:capture}.

Let one such $\delta > 0$ be fixed. By Lemma \ref{lem:perturbed-minimum-is-close}, there exists an $\e_2 > 0$ such that for $0 < \e \leq \e_2$, there exists a local minimizer $V(\e)$ such that $||V(\e) - U|| < \min(t,\delta)$.

Now let $\beta > 0$ be the radius of a ball around $U$ that is contained in $C$. 
By Lemma \ref{lem:diagonalize-one-matrix}, there exists an $\e_3 > 0$ such that for $0 < \e \leq \e_3$, $||W(\e) - U|| < \beta$. Note that the lemma holds because we obtain eigenvalues by random projection, and hence they will be distinct almost surely.

Since $||W(\e) - U|| < \beta$, $W(\e) \in C$. Suppose that we start the algorithm on the function $F(V,\e)$ at $W(\e)$. Then since $S \subseteq D(V(\e),\e,\delta)$, all subsequent iterates will be in $D(V(\e),\e,\delta)$ as well. By compactness, the iterates will converge to a point $Y$ satisfying $|| V(\e) - Y || < \delta$. Furthermore, since $|| U - V(\e) || < \min(t,\delta)$, we have
$$|| U - Y || < 2\delta.$$
Thus the lemma holds with $\delta_0 := \gamma$ and $\e_0 = \min(\e_1, \e_2, \e_3)$.

\end{proof}

\subsection{Cardoso's lemma holds in a neighborhood of $U$}

Finally, we will make the argument that Cardoso's lemma holds for the points to which the Jacobi method will converge.

First, recall that any orthogonal matrix $V$ can be written as $U\exp(E)$ for some skew-symmetric matrix $E$.
The fact that $V = (I+E)U + o(E)$ follows from this observation by using the series form of the matrix exponential $\exp(E)$.

Cardoso's result holds for any $V = (I+\e E')U + o(\e E')$, where $E = O(1)$. Thus, it is enough to show that the $E$ defined above satisfies $E = O(\e)$. To prove this, we will use the following lemma by Cardoso:

\begin{lemma}[Cardoso]
\label{lem:cardoso-foc}
If $V$ is critical point of $F(\cdot, \sM)$, then
$S(V, \sM) = S^T(V,\sM),$
where
$$S(V, \sM) := \sum_k\sum_{i\neq j} (e_i^T V^T M_k V e_j)(e_i e_j^T V^T M_k^T V - V^TM_k^T V e_ie_j^T)$$
\end{lemma}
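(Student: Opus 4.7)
The plan is to derive the first-order optimality condition directly by computing the Riemannian gradient of $F(\cdot,\sM)$ on $O(n)$ and matching it to the object $S(V,\sM)$ in the statement. Any tangent vector at $V \in O(n)$ has the form $VX$ with $X$ skew-symmetric, so $V$ is a critical point iff $\frac{d}{dt}\big|_{t=0} F(V + tVX) = 0$ for every skew-symmetric $X$. I would rewrite this directional derivative as $\operatorname{tr}(X\,M(V,\sM))$ for an explicit matrix $M$, and then invoke the elementary fact that $\operatorname{tr}(XM) = 0$ for all skew $X$ iff $M$ is symmetric (equivalently, iff $M - M^\top = 0$). The remainder is a purely algebraic identification of $M - M^\top$ with $S - S^\top$.

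The derivative is straightforward. Write $A_l = V^\top M_l V$ and $B_l = \off(A_l)$. Because $V^\top V = I$ and $X^\top = -X$, one obtains $(V+tVX)^\top M_l (V+tVX) = A_l + t[A_l,X] + O(t^2)$, so the infinitesimal change in $A_l$ along $VX$ is the commutator $[A_l,X]$. Splitting $\|\off(A_l)\|_F^2 = \|A_l\|_F^2 - \|\diag(A_l)\|_F^2$ and differentiating using cyclicity of the trace gives
\begin{align*}
\tfrac{d}{dt}\big|_{t=0} \|A_l(t)\|_F^2 &= 2\operatorname{tr}\bigl(X[A_l^\top,A_l]\bigr), \\
\tfrac{d}{dt}\big|_{t=0} \|\diag(A_l(t))\|_F^2 &= 2\operatorname{tr}\bigl(X[\diag(A_l),A_l]\bigr),
\end{align*}
and subtracting (using $A_l^\top - \diag(A_l) = B_l^\top$) collapses to
\begin{align*}
\tfrac{d}{dt}\big|_{t=0} F(V+tVX) = 2\operatorname{tr}\Bigl(X \sum_l (B_l^\top A_l - A_l B_l^\top)\Bigr).
\end{align*}
Hence the critical-point condition is exactly that $M(V,\sM) := \sum_l(B_l^\top A_l - A_l B_l^\top)$ be symmetric.

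To match Cardoso's expression, observe that $\sum_{i\neq j}(e_i^\top V^\top M_k V e_j)\,e_i e_j^\top = B_k$, so the matrix in the statement collapses to $S(V,\sM) = \sum_k (B_k A_k^\top - A_k^\top B_k)$. A direct transposition gives $S - S^\top = \sum_k[B_k A_k^\top - A_k B_k^\top - A_k^\top B_k + B_k^\top A_k]$, which equals $M - M^\top$ after reordering the four summands. Therefore $M = M^\top$ iff $S = S^\top$, which completes the argument. The main obstacle is purely the algebraic bookkeeping — keeping track of which factor carries a transpose, verifying that the $\diag(A_l)$ contribution merges cleanly into $B_l^\top = A_l^\top - \diag(A_l)$, and then reconciling the commutator form produced by the derivative with the outer-product form appearing in the definition of $S$. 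No non-degeneracy of the $\La_l$ and no regularity assumption on the $M_l$ is required, so the stated condition holds at every critical point of $F(\cdot,\sM)$, as needed by the convergence arguments developed in the appendix.
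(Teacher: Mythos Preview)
Your argument is correct: the directional derivative along $VX$ with $X$ skew indeed collapses to $2\operatorname{tr}\bigl(X\sum_l(B_l^\top A_l - A_l B_l^\top)\bigr)$, and the identification $S^\top = -M$ (hence $S-S^\top = M-M^\top$) is exactly right, so the critical-point condition $M=M^\top$ is equivalent to $S=S^\top$. The paper does not supply its own proof of this lemma---it is simply quoted as a result of Cardoso (1994) and used as a black box in the subsequent argument---so there is no alternative derivation in the paper to compare against; your Riemannian-gradient computation is the standard way to obtain it and would be a welcome addition.
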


\begin{lemma}
Let $V = (I+E)U + o(E)$ be the expanded expression of a local minimizer $V(\e)$ of $F(\cdot, \sM_\e)$. Then $|| E || = O(\e)$ as $\e \to 0$.
\end{lemma}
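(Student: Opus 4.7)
The plan is to linearize Cardoso's first-order optimality condition (Lemma \ref{lem:cardoso-foc}) around the unperturbed point $V = U$, $\e = 0$ and read off $\|E\| = O(\e)$ from the leading-order equation. Using the parameterization $V = \exp(E)\,U$ with $E$ skew-symmetric (which yields $V = (I+E)U + o(E)$) and the qualitative fact $E(\e) \to 0$ established by the convergence lemmas earlier in the appendix, it suffices to work to first order in $\e$ and $\|E\|$.

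Substituting $M_l = U D_l U^\top + \e R_l$ and $V = (I+E)U + o(E)$ into $X_l := V^\top M_l V$ and expanding via the commutator identity gives
\begin{align*}
X_l = D_l + [D_l,\, U^\top E U] + \e\, U^\top R_l U + o(\|E\|) + o(\e),
\end{align*}
so that for $a \neq b$ the off-diagonal entry is $(X_l)_{ab} = (d_{al} - d_{bl})\,(U^\top E U)_{ab} + \e\,(U^\top R_l U)_{ab} + o(\|E\| + \e)$. Next, plug this expansion into the definition of $S(V, \sM_\e)$ and compute $S - S^\top$. The zeroth-order contribution vanishes: at $V = U$, $\e = 0$, each $X_l = D_l$ is diagonal, so the double sum over $i \neq j$ is identically zero. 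After using the symmetry $X_l = X_l^\top$ to collapse pairs of terms in the definition of $S$, the first-order expansion reduces to
\begin{align*}
\tfrac{1}{2}(S - S^\top)_{ab} = \sum_{l=1}^L (d_{bl} - d_{al})\,(X_l)_{ab} + o(\|E\| + \e)
\end{align*}
for $a \neq b$. Setting this to zero by Cardoso's FOC and substituting for $(X_l)_{ab}$ yields
\begin{align*}
\Bigl(\sum_{l=1}^L (d_{al} - d_{bl})^2\Bigr)(U^\top E U)_{ab} = \e \sum_{l=1}^L (d_{bl} - d_{al})\,(U^\top R_l U)_{ab} + o(\|E\| + \e).
\end{align*}

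Finally, invoke the identifiability hypothesis for joint diagonalization, i.e.\ that the modulus of uniqueness $\rho_{ab}$ is strictly less than $1$ for $a \neq b$, which is equivalent to $\gamma_{ab} := \sum_{l=1}^L (d_{al} - d_{bl})^2 > 0$. Dividing through produces the entry-wise bound $|(U^\top E U)_{ab}| \le C\e + o(\|E\| + \e)$; summing over $a \neq b$ (the diagonal of $U^\top E U$ vanishes by skew-symmetry) and using $\|E\|_F = \|U^\top E U\|_F$ gives $\|E\| \le C'\e + o(\|E\|)$. Since $\|E\| \to 0$ as $\e \to 0$, a short bootstrap rearranges this to $\|E\|(1 - o(1)) \le C'\e$ and hence $\|E\| = O(\e)$. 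The main obstacle is the combinatorial expansion of $(S - S^\top)_{ab}$ to first order; the key simplification is that $X_l = X_l^\top$ collapses the four contributions in the definition of $S$ into a single term proportional to $(d_{bl} - d_{al})$, after which the identifiability assumption finishes the argument.
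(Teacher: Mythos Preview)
Your proposal is correct and follows essentially the same route as the paper: both linearize Cardoso's first-order optimality condition $S(V,\sM_\e)=S(V,\sM_\e)^\top$ around $(V,\e)=(U,0)$, arrive at the entrywise relation $\bigl(\sum_l (d_{al}-d_{bl})^2\bigr)\,(\text{entry of }E) = O(\e)+o(\|E\|+\e)$, and then invoke identifiability ($\sum_l (d_{al}-d_{bl})^2>0$) together with $E\to 0$ to conclude $\|E\|=O(\e)$. The only cosmetic difference is that you carry out the expansion of $(S-S^\top)_{ab}$ explicitly via the commutator $[D_l,U^\top E U]$, whereas the paper simply cites Cardoso (1994) for that formula.
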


\begin{proof}

Before we proceed, observe that the faction that $V$ converges to a global minimizer $U$ by our previous lemma and thus $E \to 0$ as $\e \to 0$. All we need to establish is the rate. For this, we will use Lemma \ref{lem:cardoso-foc}. Recall that $V$ is a critical point of $F$ whenever $S(V, \sM) = S(V, \sM)^T$, where
$$S(V, \sM) := \sum_k\sum_{i\neq j} (e_i^T V^T M_k V e_j)(e_i e_j^T V^T M_k^T V - V^TM_k^T V e_ie_j^T).$$

It is shown by Cardoso (1994) that
$$(S(V, \sM_\e) - S(V, \sM_\e)^T)_{ij} = 2 \e \sum_k (d_i (k) - d_j(k)) e_i^T U^T R_k U e_j + 2 E_{ij} \sum_k (d_i(k) - d_j(k))^2 + o(\e) + o(E_{ij}). $$
Thus, we can define 
$$ f_{ij}(E_{ij}, \e) = (S(V, \sM_\e) - S(V, \sM_\e)^T)_{ij}. $$
Let $e_n$ be any sequence such that $V(e_n) \to U$ as $e_n \to 0$ and let $E_{ij}^{(n)}$ be the $(i,j)$-th element of the matrix $E_n$ defined in $V_n = (I + E_n) + o(E_n)$.
Combining this with Cardoso's expansion and using the fact that $V(e_{(n)})$ is a minimizer, we have that
$$0 = A_{ij} E_{ij}^{(n)} + B_{ij} \e_n + o(E_{ij}) + o(\e), $$
for all $n$, where $A_{ij}$ and $B_{ij}$ are the constants from Cardoso's expansion. This immediately establishes that $E_{ij} = O(\e)$ (with constant $B_{ij}/A_{ij}$). The fact that $||E|| = O(\e)$ follows by combining the result for each $(i,j)$.

\end{proof}

The above lemma establishes that Lemma \ref{lem:cardoso-1} to holds for points attained by the Jacobi algorithm. 


\end{document}